\newcolumntype{C}[1]{>{\centering\arraybackslash$}p{#1}<{$}}
\crefname{subsection}{Subsection}{Subsections}
\def\<{\langle}
\def\>{\rangle}
\newtheorem{theorem}{Theorem}[section]
\newtheorem{lemma}[theorem]{Lemma}
\newtheorem{proposition}[theorem]{Proposition}
\newtheorem{corollary}[theorem]{Corollary}
\newtheorem{remark}[theorem]{Remark}
\newtheorem{definition}[theorem]{Definition}
\newtheorem{example}[theorem]{Example}
\title{Polynomial braid combing}
\author{Juan Gonz\'alez-Meneses and Marithania Silvero\footnote{Both authors partially supported by the Spanish research projects MTM2013-44233-P, MTM2016-76453-C2-1-P and FEDER.}}
\date{December, 2017}
\begin{document}

\maketitle


\begin{abstract}
Braid combing is a procedure defined by Emil Artin to solve the word problem in braid groups for the first time. It is well-known to have exponential complexity. In this paper, we use the theory of straight line programs to give a polynomial algorithm which performs braid combing. This procedure can be applied to braids on surfaces, providing the first algorithm (to our knowledge) which solves the word problem for braid groups on surfaces with boundary in polynomial time and space.

In the case of surfaces without boundary, braid combing needs to use a section from the fundamental group of the surface to the braid group. Such a section was shown to exist by Gon\c calves and Guaschi, who also gave a geometric description. We propose an algebraically simpler section, which we describe explicitly in terms of generators of the braid group, and we show why the above procedure to comb braids in polynomial time does not work in this case.
\end{abstract}

\section{Introduction}

Braid groups can be seen as the fundamental group of the configuration space of $n$ distinct points in a closed disc $\mathbb D$. If the points are unordered, the fundamental group is called the {\it full} braid group (or just the braid group) with $n$ strands, and denoted $B_n$. If the points are ordered, the obtained group is a finite index subgroup of $B_n$, called the {\it pure} braid group with $n$ strands, $P_n$.

If one replaces the closed disc $\mathbb D$ with any connected surface $S$, one obtains the full braid group $B_n(S)$ and the pure braid group $P_n(S)$ with $n$ strands on $S$.

Emil Artin~\cite{Artin} solved the word problem in braid groups (on the disc) for the first time. Actually, he solved the word problem in $P_n$, and then used that $B_n$ is a finite extension of $P_n$ by the symmetric group $\Sigma_n$. The way in which he solved the word problem in $P_n$ is known as {\it braid combing}. Artin showed that $P_n$ can be seen as an iterated semi-direct product of free groups:
$$
      P_n=((\cdots ((\mathbb F_2 \ltimes \mathbb F_3)\ltimes \mathbb F_4)\ltimes \cdots \mathbb F_{n-2})\ltimes \mathbb F_{n-1}.
$$
The braid combing consists on computing the normal form of a pure braid with respect to the above semi-direct decomposition. As the word problem in a free group of finite rank is well-known, this solves the word problem in $P_n$.

But there is a big issue with braid combing: It is an exponential procedure. If we start with a word of length $m$ in the standard generators of $P_n$, the length of the combed braid may be exponential in $m$, when written in terms of the generators of the free groups. An explicit example is given in \cref{subsectionexponencial}. Artin was of course aware of this; In the very last paragraph of~\cite{Artin}, in which he talks about braid combing, he says:

{\it ``Although it has been proved that every braid can be deformed into a similar normal form the writer is convinced that any attempt to carry this out on a living person would only lead to violent protests and discrimination against mathematics. He would therefore discourage such an experiment"}.

It this paper we use the theory of {\it straight line programs} (a compressed way to store a word as a set of instructions to create it), to perform braid combing in polynomial time and space. This means that we give an algorithm which, given a word $w$ of length $m$ in the standard generators of $P_n$, computes $n-1$ compressed words, each one representing a factor of the combed braid associated to $w$, in polynomial time and space with respect to $m$.

Furthermore, given two pure braids, one can compare the compressed words associated to each of them in polynomial time. Hence, this procedure gives a polynomial solution of the word problem in pure braid groups, using braid combing.

This result, in the case of classical braids, does not improve the existing algorithms, as there are quadratic solutions to the word problem in braid groups of the disc. But it happens that the above procedure is valid not only for braids on the disc, but also for braids on any compact, connected surface $S$ with boundary. Hence, this provides the first polynomial algorithm to solve the word problem in $P_n(S)$. The previously known algorithms~\cite{Scott, JuanJKTR}, based on usual braid combing, are clearly exponential.

In the case of closed surfaces, the braid combing is quite different. There is not such a decomposition of $P_n(S)$ as a semi-direct product of free groups, and one needs to use instead a decomposition $P_n(S)= \pi_1(S) \ltimes P_{n-1}(S\backslash \{p_1\})$, where $p_1$ is a point in $S$. The existence of such a decomposition was shown by Gon\c calves and Guaschi~\cite{GoncGuasch}, by giving a suitable section $s:\pi_1(S)\rightarrow P_n(S)$ of the natural projection $\pi: P_n(S)\rightarrow \pi_1(S)$ which they explained geometrically, but not algebraically. In this paper we provide an explicit algebraic section, for closed orientable surfaces of genus $g>0$, which does not coincide with the one defined in~\cite{GoncGuasch} (although we also give an explicit algebraic description of the section in~\cite{GoncGuasch}).

Finally, we explain why the procedure used to solve the word problem in surfaces with boundary does not generalize to closed surfaces in the natural way.

The plan of the paper is the following. In \cref{S:braids_on_surfaces} we introduce the basic notions of braids on surfaces. Then in \cref{sectioncombing} we explain braid combing in the case of surfaces with boundary, and give an example showing that combing is exponential. Straight line programs are treated in \cref{sectionstraightline}, and in \cref{sectioncompressed} we introduce the notion of {\it compressed braid combing} and give the polynomial algorithm to solve the word problem in braid groups on surfaces with boundary. \cref{sectioncombingclosed} deals with combing on a closed surface: We define the group section from $\pi_1(S)$ to $P_n(S)$ when $S$ is a closed surface, and we explain why the compressed braid combing cannot be generalized to this case in a natural way.

{\bf Acnkowledgements:} The first author thanks Saul Schleimer, for teaching him about straight line programs at the Centre de Recerca Matem\`atica (Barcelona) in 2012, and for useful conversations.


\section{Braids on surfaces}~\label{S:braids_on_surfaces}

Let $S$ be a compact, connected surface of genus $g$ and $p$ boundary components, and let $\mathcal{P} = \{p_1, \ldots, p_n\}$ be a set of $n$ distinct points of $S$. A \emph{geometric braid} on $S$ based at $\mathcal{P}$ is an n-tuple $\beta = (\gamma_1, \ldots, \gamma_n)$ of paths $\gamma_i : [0,1] \rightarrow S$, such that
\begin{itemize}
\item $\gamma_i(0) = p_i$, \,  $\forall i \in \{1, \ldots, n \}$,
\item $\gamma_i(1) \in \mathcal{P}$, \, $\forall i \in \{1, \ldots, n \}$,
\item $\{\gamma_1(t), \ldots, \gamma_n(t)\}$  are $n$ distinct points of $S$ \, $\forall t \in [0,1]$.
\end{itemize}

A braid on $S$ based at $\mathcal P$ is a homotopy class of such geometric braids (notice that homotopies must fix the endpoints).
The usual product of paths endows the set of braids with a group structure, and the resulting group (which is independent of the choice of $\mathcal{P}$) is called the \emph{braid group with $n$ strands on $S$}, and denoted $B_n(S)$. The path starting at $p_i$ will be called the $i$th strand of the braid.

The above definition can also be explained by saying that the braid group $B_n(S)$ is the fundamental group of $M_n(S)/\Sigma_n$, where
$$
    M_n(S)=\{(x_1,\ldots,x_n)\in S^n;\ x_i\neq x_j\; \forall i\neq j\}
$$
is the configuration space of $n$ distinct points in $S$, and $M_n(S)/\Sigma_n$ is the quotient of $M_n$ under the natural action of the symmetric group $\Sigma_n$ which permutes coordinates~\cite{Birman}. In other words, a braid can be seen as a motion of $n$ distinct points in $S$, whose initial configuration is $\mathcal P$, they move along the surface without colliding, and their final configuration is again $\mathcal P$ (though the particular position of each point in $\mathcal P$ may have changed). We will sometimes use this dynamic interpretation of a braid throughout this paper.

There are well known presentations for braid groups of surfaces.  In the particular cases of the sphere, the torus and the projective plane, the classical references are~\cite{FadellVanBuskirk, BirmanTorus, GoncGuaschiRP2}. For higher genus, one can find presentations in~\cite{Scott, JuanJKTR, PaoloPresentations, GoncGuaschNonOrient}. In all these presentations, some generators are related to the generators $\sigma_1,\ldots, \sigma_{n-1}$ of the classical braid group (and correspond to strand crossings), while some other generators are related to the generators of the fundamental group of $S$ (and correspond to motions of the distinguished points along the surface).

A braid $\beta$ is said to be pure if $\gamma_i(1) = p_i$, for all $i \in \{1, \ldots, n \}$, that is, if after the motion each distinguished point goes back to its original position. Pure braids form a finite index subgroup of $B_n(S)$, the \emph{pure braid group with n strands on $S$}, denoted $P_n(S)$. Notice that $P_n(S)$ is just the fundamental group of the configuration space $M_n(S)$. In particular, $B_1(S) = P_1(S) = \pi_1(S)$.

The main object of study in this paper is braid combing, which is a procedure to produce a particular normal form for pure braids. For this purpose, we need to make precise a particular presentation of $P_n(S)$. From now on we consider that $S$ is a compact, connected orientable surface. We will see that the combing in the non-orientable case is analogous, since one just need to consider the corresponding presentation of $P_n(S)$ appearing in \cite[Theorem 3]{GoncGuaschNonOrient}.

\begin{theorem} {\rm{\cite{PaoloPresentations}}} \label{teoprespure1} 
Let $S$ be an orientable surface of genus $g\geq 0$ with $p > 0$ boundary components. The group $P_n(S)$ admits the following presentation:
\begin{itemize}
\item Generators: \small{$\{A_{i,j} \ |  \ 2g+p \leq j \leq 2g+p+n-1, \ \ 1\leq i<j$\}.}
\item Relations:

\begin{itemize}
\item[(PR1)] $A^{-1}_{i,j}A_{r,s}A_{i,j} = A_{r,s}$ \hfill if $(i<j<r<s)$ \, or \, $(r+1<i<j<s)$ \\ \null \hfill   or \, $(i=r+1<j<s$ where $r\geq 2g$ or $r$ even{\rm )}.
\item[(PR2)] $A^{-1}_{i,j}A_{j,s}A_{i,j} = A_{i,s}A_{j,s}A^{-1}_{i,s}$ \hfill if $i<j<s$.
\item[(PR3)] $A^{-1}_{i,j}A_{i,s}A_{i,j} = A_{i,s}A_{j,s}A_{i,s}A^{-1}_{j,s}A^{-1}_{i,s}$ \hfill if $i<j<s$.
\item[(PR4)] $A^{-1}_{i,j}A_{r,s}A_{i,j} = A_{i,s}A_{j,s}A^{-1}_{i,s}A^{-1}_{j,s}A_{r,s}A_{j,s}A_{i,s}A^{-1}_{j,s}A^{-1}_{i,s}$ \\ \null \hfill if $(i+1 <~r~<~j<~s)$
 \\ \null \hfill or $(i+1 = r < j < s$ where $r>2g$ or $r$ odd{\rm )}.
\item[(ER1)] $A^{-1}_{r+1,j}A_{r,s}A_{r+1,j} = A_{r,s}A_{r+1,s}A^{-1}_{j,s}A^{-1}_{r+1,s}$ \hfill if $r$ odd, $r<2g$ and $j<s$.
\item[(ER2)] $A^{-1}_{r-1,j}A_{r,s}A_{r-1,j} = A_{r-1,s}A_{j,s}A^{-1}_{r-1,s}A_{r,s}A_{j,s}A_{r-1,s}A^{-1}_{j,s}A^{-1}_{r-1,s}$ \\ \null \hfill if $r$ even, $r\leq2g$ and $j<s$.
\end{itemize}
\end{itemize}
\end{theorem}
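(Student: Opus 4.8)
The presentation in \cref{teoprespure1} is quoted from \cite{PaoloPresentations} and is used below as a black box; here is how I would establish it. \textbf{Overall strategy.} The plan is to induct on the number of strands $n$, exploiting the Fadell--Neuwirth fibration
$$
    S\setminus\{q_1,\dots,q_{n-1}\}\ \hookrightarrow\ M_n(S)\ \longrightarrow\ M_{n-1}(S)
$$
obtained by forgetting the last point. Since $S$ is compact with $\partial S\neq\emptyset$, it deformation retracts onto a wedge of $2g+p-1$ circles, so $M_{n-1}(S)$ is aspherical (hence $\pi_2(M_{n-1}(S))=0$), and the fibration admits a section: send a configuration of $n-1$ points to that same configuration together with one extra point placed at a fixed spot inside a fixed boundary collar. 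The homotopy long exact sequence then collapses to a split short exact sequence
$$
    1\longrightarrow \pi_1\!\big(S\setminus\{q_1,\dots,q_{n-1}\}\big)\longrightarrow P_n(S)\longrightarrow P_{n-1}(S)\longrightarrow 1,
$$
so $P_n(S)\cong F^{(n-1)}\rtimes P_{n-1}(S)$, where $F^{(n-1)}=\pi_1(S\setminus\{q_1,\dots,q_{n-1}\})$ is free of rank $2g+p+n-2$. Iterating down to $P_1(S)=\pi_1(S)=F^{(0)}$ exhibits $P_n(S)$ as an iterated semidirect product of free groups. One must choose the sections coherently along the induction (the ``push to the boundary'' section does this), so that the copy of $P_{n-1}(S)$ sitting inside $P_n(S)$ is generated by the ``same'' symbols $A_{i,j}$ with $j\le 2g+p+n-2$.

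\textbf{Base case and choice of generators.} For $n=1$ the claimed presentation involves only $A_{1,2g+p},\dots,A_{2g+p-1,2g+p}$ (the sole admissible second index being $2g+p$) and no relations, because every relation (PR1)--(ER2) requires an index $s>j\ge 2g+p$; so it reduces to a free group of rank $2g+p-1$, which is indeed $\pi_1(S)$. At this stage one fixes explicit embedded loops: the consecutive pairs $A_{2r-1,2g+p},A_{2r,2g+p}$ for $1\le r\le g$ realising the two standard generators of the $r$-th handle, and $A_{2g+1,2g+p},\dots,A_{2g+p-1,2g+p}$ realising simple loops around the first $p-1$ boundary components. This choice --- handle generators grouped into even/odd pairs --- is what later forces the asymmetries in (PR1), (PR4) and the existence of (ER1), (ER2) to be governed by the parity of $r$ and by whether $r<2g$.

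\textbf{Inductive step.} Assume $P_{n-1}(S)$ has the stated presentation. A presentation of a semidirect product $N\rtimes Q$ with $N$ free on $x_1,\dots,x_m$ is obtained by adjoining $x_1,\dots,x_m$ to a presentation of $Q$ together with one relation $q^{-1}x_k q=\varphi_q(x_k)$ for each generator $q$ of $Q$ and each $x_k$, where $\varphi_q\in\operatorname{Aut}(N)$ is conjugation by a fixed lift of $q$, written as a word in the $x_\ell$. Applying this with $N=F^{(n-1)}$, whose free basis we call $A_{1,2g+p+n-1},\dots,A_{2g+p+n-2,\,2g+p+n-1}$, the only new relations are those of the form $A_{i,j}^{-1}A_{r,\,2g+p+n-1}A_{i,j}=(\text{word in the }A_{\cdot,\,2g+p+n-1})$ with $j\le 2g+p+n-2$. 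Thus the whole content of the step is to verify that these conjugation words coincide with the right-hand sides of (PR1)--(ER2) in the cases where the larger second index equals $2g+p+n-1$; the remaining relations are, by induction, exactly the relations of $P_{n-1}(S)$.

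\textbf{Main obstacle.} The hard and only non-formal part is this monodromy computation. Working in a planar model of $S$ (a disc with $g$ handles and $p-1$ further open discs removed), with the curves above fixed, one must compute for every generator $A_{i,j}$ of $P_{n-1}(S)$ how the point-pushing homeomorphism of $S\setminus\{q_1,\dots,q_{n-1}\}$ associated to it acts on each standard loop encircling the newly forgotten point $q_n$, and record the answer as a word in the $A_{k,\,2g+p+n-1}$. Specialising to the disc ($g=0$, $p=1$) this must reproduce Artin's four pure-braid relations, which is a useful consistency check; the genuinely new cases --- a handle generator dragged across the partner generator of the same handle, versus across a different handle or a boundary loop --- are precisely the ones that split into the parity-of-$r$ and ``$r<2g$'' sub-cases and that produce (ER1), (ER2) and the extra clauses of (PR1), (PR4). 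Essentially all the work, and all the room for error, sits in this geometric case analysis; once it is complete, a sequence of Tietze transformations reduces the a priori redundant list of conjugation relations to the economical form stated, and one checks that no relation is lost in the reduction.
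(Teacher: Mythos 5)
The paper itself offers no proof of \cref{teoprespure1}: the presentation is quoted from Bellingeri \cite{PaoloPresentations} (with some misprints corrected), so there is no internal argument to compare against. Your outline does follow the strategy of that cited source --- induction on $n$ via the Fadell--Neuwirth fibration, the splitting that exists because $\partial S\neq\emptyset$, and the standard presentation of a semidirect product $\mathbb F_{2g+p+n-2}\rtimes P_{n-1}(S)$, with the base case $P_1(S)=\pi_1(S)$ free of rank $2g+p-1$ --- so the framework is the right one and is sound.

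As a proof attempt, however, it has a genuine gap, which you yourself flag: the whole content of the theorem is the explicit form of the conjugation words, i.e.\ that the point-pushing action of each $A_{i,j}$ on the free generators $A_{r,\,2g+p+n-1}$ is given exactly by the right-hand sides of (PR1)--(ER2), including the delicate case distinctions ($r$ even versus odd, $r\geq 2g$ versus $r<2g$, $i=r+1$ versus $i+1=r$). None of these monodromy computations is carried out, so the relations are asserted rather than derived; the fact that the published presentation itself contained misprints (as the paper remarks) shows this is precisely where correctness is decided, and a Tietze-reduction step ``checked to lose no relation'' cannot be waved at without the explicit words in hand. Two smaller inaccuracies: the section of the fibration cannot literally be ``add a point at a fixed spot in a boundary collar,'' since the other $n-1$ points may pass through any fixed spot --- one first pushes the whole configuration off a collar by an embedding of $S$ into itself isotopic to the identity and then adds the new point in the vacated collar; and left-exactness of the sequence does not follow from $S$ retracting to a wedge of circles (that does not by itself give asphericity of $M_{n-1}(S)$) --- either invoke the Fadell--Neuwirth asphericity theorem or simply note that the existence of the section kills the boundary map from $\pi_2$ of the base.
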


\medskip
\begin{remark}
We have corrected some missprints of the presentation appearing in~\cite{PaoloPresentations}.
\end{remark}

The generator $A_{i,j}$ can be represented as a motion of a single point of $\mathcal P$. Notice that $2g+p\leq j \leq 2g+p+n-1$, so we can write $j=(2g+p-1)+k$ for some $k=1,\ldots, n$. Then $A_{i,j}$ represents a motion of the point $p_k$ as shown in \cref{figgeneradoresPnS}. If $1\leq i\leq 2g$ the motion of $p_k$ corresponds to one of the classical generators of the fundamental group of a closed surface. If $i=2g+r$ with $r=1,\ldots,p-1$, the point $p_k$ moves around the $r$th boundary component (notice that there is no generator in which $p_k$ moves around the $p$th boundary component). If $i=(2g+p-1)+t$ for some $t=1,\ldots,k-1$, the point $p_k$ moves around the point $p_t$, as in the classical generators for the pure braid group of the disc~\cite{Birman}.

\begin{figure}[ht]
\centering
\includegraphics[width = 12.2cm]{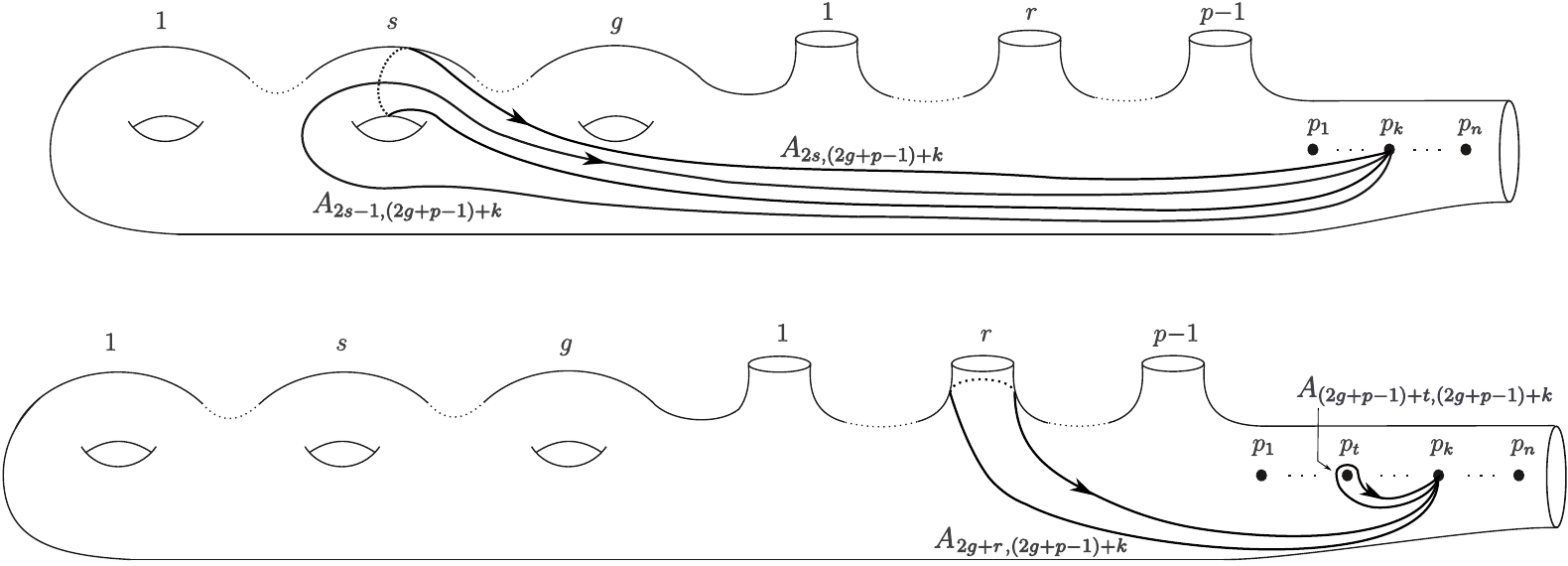}
\caption{\small{A geometric representation of each of the four different families of motions of $p_k$ represented by generators $A_{i,(2g+p-1)+k}$.}}
\label{figgeneradoresPnS}
\end{figure}

\begin{remark}The presentation given in~\cite{PaoloPresentations} is stated for $g>0$, but it also holds when $g=0$. For instance, if $g=0$ and $p=1$, $P_n(S)$ is the classical pure braid group $P_n$, the only relations that survive are $(PR1)-(PR4)$, and the presentation of \cref{teoprespure1} is precisely the presentation of $P_n$ given in~\cite{Birman}.
\end{remark}

\begin{remark}\label{remarkrelations}
By using the relations in \cref{teoprespure1} we can rewrite each conjugation $A_{i,j}^{-1}A_{r,s}^{\pm 1}A_{i,j}$ with $j<s$ as a word in generators whose second subindices equal $s$. Moreover, we can use these words to derive analogous relations allowing us to rewrite a word of the form $A_{i,j}A_{r,s}^{\pm 1}A_{i,j}^{-1}$ with $j<s$ as a word in generators whose second subindices equal $s$:
\begin{itemize}
\item[ ] \begin{itemize}
\item[(PR1$'$)] $A_{i,j}A_{r,s}A_{i,j}^{-1} = A_{r,s}$ \hfill if $(i<j<r<s)$ \, or \, $(r+1<i<j<s)$ \\ \null \hfill   or \, $(i=r+1<j<s$ where $r\geq 2g$ or $r$ even{\rm )}.
\item[(PR2$'$)] $A_{i,j}A_{j,s}A_{i,j}^{-1} = A_{j,s}^{-1}A_{i,s}^{-1}A_{j,s}A_{i,s}A_{j,s}$ \hfill if $i<j<s$.
\item[(PR3$'$)] $A_{i,j}A_{i,s}A_{i,j}^{-1} = A_{j,s}^{-1}A_{i,s}A_{j,s}$ \hfill if $i<j<s$.
\item[(PR4$'$)] $A_{i,j}A_{r,s}A_{i,j}^{-1} = A^{-1}_{j,s}A^{-1}_{i,s}A_{j,s}A_{i,s}A_{r,s}A^{-1}_{i,s}A^{-1}_{j,s}A_{i,s}A_{j,s}$ \\ \null \hfill if $(i+1 <~r~<~j<~s)$
 \\ \null \hfill or $(i+1 = r < j < s$ where $r>2g$ or $r$ odd{\rm )}.
\item[(ER1$'$)] $A_{r+1,j}A_{r,s}A_{r+1,j}^{-1} = A_{r,s}A_{j,s}$ \hfill if $r$ odd, $r<2g$ and $j<s$.
\item[(ER2$'$)] $A_{r-1,j}A_{r,s}A_{r-1,j}^{-1} = A^{-1}_{j,s}A_{r,s}A_{r-1,s}^{-1}A_{j,s}^{-1}A_{r-1,s}A_{j,s}$ \\ \null \hfill if $r$ even, $r \leq 2g$ and $j<s$.
\end{itemize}
\end{itemize}

These relations together with those appearing in \cref{teoprespure1} are used frequently throughtout this paper. We use the expression \textit{(PR/ER)-relations} to denote the set consisting of these 12 types of relations.
\end{remark}

When $S$ is a closed surface, one needs to add an extra relation in the presentation of $P_n(S)$.  We write $[a,b]=aba^{-1}b^{-1}$.

\begin{theorem} {\rm{\cite{PaoloPresentations}}} \label{teoprespure2}
Let $S$ be an orientable closed surface of genus $g\geq 0$. The group $P_n(S)$ admits a presentation with generators $$\{A_{i,j} \ | \  1 \leq i \leq 2g + n - 1, \ \ 2g+1 \leq j \leq 2g+n, \ \ i<j\}$$
and the same relations as those in \cref{teoprespure1} together with\\
\hspace{1cm} (TR) {\small $[A^{-1}_{2g,2g+k}, A_{2g-1,2g+k}] \cdots [A^{-1}_{2,2g+k}, A_{1,2g+k}] \, = \, \displaystyle \prod_{l=2g+1}^{2g+k-1} A_{l,2g+k} \prod_{j=2g+k+1}^{2g+n} A_{2g+k,j},$}\\
for $k=1, \ldots, n$.
\end{theorem}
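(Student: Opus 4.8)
The plan is to derive this presentation from \cref{teoprespure1} by ``filling in a disc''. Let $D\subset S$ be an open disc disjoint from $\mathcal P$ and set $S'=S\setminus D$, a compact orientable surface of genus $g$ with one boundary component. The first observation is purely bookkeeping: \cref{teoprespure1} applied to $S'$ (genus $g$, $p=1$) produces generators $\{A_{i,j}\mid 2g+1\le j\le 2g+n,\ 1\le i<j\}$, which is exactly the generating set of \cref{teoprespure2} (since $i<j\le 2g+n$ forces $i\le 2g+n-1$), together with relations (PR1)--(ER2). Let $\phi_n\colon P_n(S')\twoheadrightarrow P_n(S)$ be the homomorphism induced by $S'\hookrightarrow S$; its surjectivity is standard, since a loop of configurations in the connected surface $S$ can be perturbed so that the traces of the points form a graph, which may then be pushed off $D$ by an ambient isotopy. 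Thus \cref{teoprespure2} is equivalent to the claim that $\ker\phi_n$ is the normal closure in $P_n(S')$ of the $n$ words $W_1,\dots,W_n$, where $W_k$ is obtained from (TR) by transferring its right-hand side to the left: the quotient presentation of $P_n(S')/\langle\!\langle W_1,\dots,W_n\rangle\!\rangle$ is then precisely the one stated.

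For each $k$ let $\delta_k\in P_n(S')$ be the pure braid in which only $p_k$ moves, once around a loop freely homotopic to $\partial D$ and disjoint from $\mathcal P\setminus\{p_k\}$; since $\partial D$ bounds a disc in $S$ missing $\mathcal P$, we have $\phi_n(\delta_k)=1$. I would show $\ker\phi_n=\langle\!\langle\delta_1,\dots,\delta_n\rangle\!\rangle$ by induction on $n$. For $n=1$ this is the classical identity $\pi_1(S)=\pi_1(S')/\langle\!\langle\partial D\rangle\!\rangle$, and correspondingly for $n=1$ relations (PR1)--(ER2) are vacuous while (TR) for $k=1$ is $\prod_{i=1}^{g}[A_{2i,2g+1}^{-1},A_{2i-1,2g+1}]=1$, the standard closed-surface relator; so \cref{teoprespure2} holds in that case. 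For the inductive step, forgetting $p_n$ gives a commuting ladder of short exact sequences whose top row is $1\to F\to P_n(S')\to P_{n-1}(S')\to 1$, whose bottom row is $1\to\bar F\to P_n(S)\to P_{n-1}(S)\to 1$, and whose middle and right vertical maps are $\phi_n$ and $\phi_{n-1}$; here $F=\pi_1(S'\setminus\{p_1,\dots,p_{n-1}\})$ and $\bar F=\pi_1(S\setminus\{p_1,\dots,p_{n-1}\})$ are free. Applying the same disc-filling argument to the surface $S\setminus\{p_1,\dots,p_{n-1}\}$ in place of $S$ shows that $F\to\bar F$ is onto with kernel the normal closure of $\delta_n$ in $F$. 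A diagram chase then produces an exact sequence $1\to\langle\!\langle\delta_n\rangle\!\rangle_F\to\ker\phi_n\to\ker\phi_{n-1}\to 1$, and since forgetting $p_n$ sends $\delta_k$ to the corresponding element of $P_{n-1}(S')$ for every $k<n$, the inductive hypothesis $\ker\phi_{n-1}=\langle\!\langle\delta_1,\dots,\delta_{n-1}\rangle\!\rangle$ completes the induction.

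It remains to identify $\delta_k$ with the word $W_k$ inside the presentation of $P_n(S')$ given by \cref{teoprespure1}; this is the geometric core of the argument. Using the geometric description of the generators $A_{i,j}$ recalled after \cref{teoprespure1}, one reads $\partial D$ --- as a loop based near $p_k$ in $S\setminus(\mathcal P\setminus\{p_k\})$ --- as the product of the $g$ handle commutators $\prod_{i=1}^{g}[A_{2i,2g+k}^{-1},A_{2i-1,2g+k}]$ followed by the loops of $p_k$ around each of the remaining marked points: those around $p_1,\dots,p_{k-1}$ contribute the generators $A_{l,2g+k}$ with $2g+1\le l\le 2g+k-1$, and those around $p_{k+1},\dots,p_n$ contribute, after the usual conjugation, the generators $A_{2g+k,j}$ with $2g+k+1\le j\le 2g+n$. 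This word is exactly the right-hand side of (TR), so $\delta_k=W_k$; hence $\phi_n$ induces an isomorphism $P_n(S')/\langle\!\langle W_1,\dots,W_n\rangle\!\rangle\cong P_n(S)$, which by a Tietze transformation is precisely the presentation of \cref{teoprespure2}.

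The step I expect to be the main obstacle is this last identification. One has to fix orientation, basepoint and connecting-arc conventions carefully enough that the word representing $\delta_k$ comes out equal to $W_k$ letter for letter (the same delicacy behind the misprints of \cite{PaoloPresentations} corrected above), and one has to make these choices compatibly for all $k$ and all $n$, so that the forgetful maps used in the induction genuinely carry $\delta_k$ to its lower-dimensional counterpart. Finally, the case $g=0$ should be treated in passing: there $S'$ is a disc, $P_n(S')=P_n$, $S=\mathbb S^2$, the handle commutators disappear, the words $W_k$ become the classical sphere-braid relations, and the argument above goes through unchanged.
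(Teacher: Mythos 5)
This theorem is not proved in the paper: it is quoted from \cite{PaoloPresentations} (for $g>0$), and the remark following it attributes the $g=0$ case to the known presentation of \cite{GilleteVanBuskirk}, so there is no internal proof to compare against. Your strategy --- present $P_n(S')$ by \cref{teoprespure1} with $p=1$, cap off the disc, and show that $\ker\bigl(P_n(S')\twoheadrightarrow P_n(S)\bigr)$ is the normal closure of the $n$ boundary loops $\delta_k$, each identified with the relator coming from (TR) --- is close in spirit to Bellingeri's own argument, which also inducts on $n$ along the Fadell--Neuwirth sequence; your reorganization through the quotient map $\phi_n$ is legitimate for $g\geq 1$, where both rows of your ladder are exact because the relevant configuration spaces are aspherical, and the chase you sketch (lift the image of $x$ in $\ker\phi_{n-1}$ to a product of conjugates of $\delta_1,\dots,\delta_{n-1}$, then use $\ker\phi_n\cap F=\ker(F\to\bar F)=\langle\!\langle\delta_n\rangle\!\rangle_F$) does work there.

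The genuine gap is your closing claim that the $g=0$ case ``goes through unchanged''. The bottom row of your ladder is the closed-surface sequence~\labelcref{ESnm} with $m=n-1$, and for $S=\mathbb S^2$ it is exact only when $n-1\geq 3$, as recalled in \cref{sectioncombing} (see also \cite{GuaschiJuanPineda}): for $n=3$ the map $\pi_1(\mathbb S^2\setminus\{p_1,p_2\})\cong\mathbb Z\rightarrow P_3(\mathbb S^2)=\mathbb Z/2\mathbb Z$ is not injective. Injectivity of $\bar F\to P_n(S)$ is exactly what your chase uses to conclude $\ker\phi_n\cap F=\langle\!\langle\delta_n\rangle\!\rangle_F$; without it you only learn that the leftover element $z\in F\cap\ker\phi_n$ dies in $P_n(\mathbb S^2)$, not that it lies in $\langle\!\langle\delta_n\rangle\!\rangle_F$. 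So for the sphere the inductive step at $n=3$ fails and must be replaced by a direct verification that the stated presentation collapses to $P_3(\mathbb S^2)=\mathbb Z/2\mathbb Z$ (or by citing \cite{GilleteVanBuskirk}), after which your induction can restart at $n\geq 4$. A second, lesser caveat is one you flag yourself: the identification $\delta_k=W_k$ letter for letter is where the precise form of (TR) actually lives (it is also where the misprints corrected in the paper arise), and it must be carried out with basepoint, orientation and connecting-arc conventions chosen uniformly in $k$ and compatibly with the forgetful maps; as written this is a plausible outline of the computation rather than the computation itself.
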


Notice that this time the generator $A_{i,j}$ corresponds to a motion of the point $p_k$ if $j=2g+k$. The picture corresponds exactly to the case in which $p=1$ in \cref{figgeneradoresPnS}, provided one removes the boundary component on the right hand side of the picture. Note that the (PR/ER)-relations also hold in this case.

As above, \cref{teoprespure2} was stated in~\cite{PaoloPresentations} for genus $g>0$, but the presentation also holds in the case $g=0$, as one gets the known presentation for the pure braid group of the sphere given in~\cite{GilleteVanBuskirk}.

\begin{remark} Choose some points $\mathcal Q=\{q_1,\ldots, q_m\}$ in a compact, connected surface $S$, and consider the non-compact surface $S\backslash \mathcal Q$. Then the braid groups $P_n(S\backslash \mathcal Q)$ and $B_n(S\backslash \mathcal Q)$ are naturally isomorphic to $P_n(S')$ and $B_n(S')$, respectively, where $S'$ is the surface obtained from $S$ by removing a small open neighborhood of $\mathcal Q$, that is, by replacing each $q_i$ with a boundary component. In other words, removing a point from $S$ is equivalent to adding a boundary component, as far as braid groups on the surface are concerned.
\end{remark}

\section{Braid combing on a surface with boundary}\label{sectioncombing}

From now on we assume that $S$ is an orientable surface with $p>0$ boundary components, unless otherwise stated. The case when $S$ is a non-orientable surface with boundary can be treated analogously, since one just needs to consider the relations apperaring in the presentation of $P_n(S)$ \cite[Theorem 3]{GoncGuaschNonOrient}, instead of the $(PR/ER)$-relations. We will discuss the case when $S$ is an orientable closed surface in \cref{sectioncombingclosed}.

Braid combing is a process by which a particular normal form of a pure braid is obtained. It was introduced by Artin~\cite{Artin} for the pure braid groups of the disc, but it can be generalized to pure braid groups of other surfaces, as we will explain in this section.

Recall that $\mathcal P=\{p_1,\ldots,p_n\}$ are $n$ distinguished points in the surface $S$. For $m=1,\ldots,n$, we denote $\mathcal P_m=\{p_1,\ldots,p_m\}$ and $\mathcal Q_{n-m}=\{p_{m+1},\ldots,p_n\}$.  It is well known (see for instance~\cite{GuaschiJuanPineda}) that the map from $P_n(S)$ to $P_m(S)$ which `forgets' the last $n-m$ strands determines a short exact sequence
\begin{equation}~\label{ESnm}
   1 \rightarrow P_{n-m}(S\backslash \mathcal P_m) \stackrel{i_{n,m}}{\longrightarrow} P_n(S) \stackrel{p_{n,m}}{\longrightarrow} P_m(S) \rightarrow 1
\end{equation}
where the base points of the three involved groups are, respectively, $\mathcal Q_{n-m}$, $\mathcal P_n$ and $\mathcal P_{m}$.

In the case of a closed surface, the above sequence is also exact if we assume that $m\geq 3$ if $S$ is the sphere $\mathbb S^2$, and that $m\geq 2$ if $S$ is the projective plane $\mathbb RP^2$. The bad cases are those which involve finite groups, as $P_1(\mathbb S^2)=P_2(\mathbb S^2)=1$, $P_3(\mathbb S^2)=\mathbb Z/2\mathbb Z$~\cite{FadellVanBuskirk}, and also $P_1(\mathbb RP^2)=\mathbb Z/2\mathbb Z$ and $P_2(\mathbb RP^2)=\mathcal Q_8$, the quaternion group of order 8~\cite{GuaschiJuanPineda}.

In the sequence~\labelcref{ESnm}, an element of $P_{n-m}(S\backslash \mathcal P_m)$ can be seen as a braid in $P_n(S)$ in which the first $m$ strands are trivial or, in other words, in which the points of $\mathcal P_{m}$ do not move. This is equivalent to consider that $S$ has $m$ extra punctures (or, removing a small neighborhood around each fixed puncture, that $S$ has $m$ extra boundary components), and $n-m$ points are moving.

To define braid combing, we single out a particular case of the exact sequence~\labelcref{ESnm}. When $m=n-1\geq 1$ one has
\begin{equation}\label{sucespure}
1 \, \rightarrow  \pi_1 (S \setminus \mathcal{P}_{n-1})   \stackrel{i_{n,n-1}}{\longrightarrow}  P_n(S) \stackrel{p_{n,n-1}}{\longrightarrow}  P_{n-1}(S)  \rightarrow  1,
\end{equation}
where we applied that  $P_1 (S \setminus \mathcal{P}_{n-1})=\pi_1 (S \setminus \mathcal{P}_{n-1})$. We can easily describe the injection $i_{n,n-1}$ algebraically: Let $j_n=(2g+p-1)+n$. Considering each element of $\mathcal P_{n-1}$ as a boundary component and using the generators of \cref{teoprespure1}, we see that $i_{n,n-1}(A_{i,j_n})=A_{i,j_n}$ for all $i=1,\ldots, j_n-1$. The projection $p_{n,n-1}$ is also very easy to describe: For every $i<j$, the element $p_{n,n-1}(A_{i,j})$ is either 1 (if $j=j_n$), or $A_{i,j}$ (if $j<j_n$).

It is known that, since $S$ has nontrivial boundary, the sequence~\labelcref{sucespure} splits~\cite{GoncGuasch}. An explicit section $s:\: P_{n-1}(S)\rightarrow P_n(S)$ is given by $s(A_{i,j})=A_{i,j}$ for all $i,j$.

Now notice that, as $n-1\geq 1$, the fundamental group of $S \setminus \mathcal{P}_{n-1}$ is a free group of rank $2g+p+n-2$~\cite{Hatcher}, so we have an exact sequence:
\begin{equation}\label{ESn,n-1}
1 \, \rightarrow  \mathbb{F}_{2g+p+n-2}   \stackrel{i_{n,n-1}}{\longrightarrow}  P_n(S) \stackrel{p_{n,n-1}}{\longrightarrow}  P_{n-1}(S)  \rightarrow  1.
\end{equation}
Therefore, $P_n(S) = P_{n-1}(S) \ltimes \mathbb{F}_{2g+p+n-2}$, where $P_{n-1}(S)$ can be seen as the subgroup of $P_n(S)$ generated by $\{A_{i,j}\}_{i<j< j_n}$, and $\mathbb{F}_{2g+p+n-2}$ as the subgroup of $P_n(S)$ generated by $\{A_{i,j_n}\}_{i<j_n}$.

By induction on $n$,  $P_n(S)$ can be written as an iterated semi-direct product of free groups:
$$
P_n(S)= ((\cdots ((\mathbb{F}_{2g+p-1} \ltimes \mathbb{F}_{2g+p}) \ltimes \mathbb F_{2g+p+1} )\ltimes \cdots \mathbb F_{2g+p+n-3}) \ltimes  \mathbb{F}_{2g+p+n-2}.
$$
The process of computing the normal form of a braid ({\bf on a connected, compact surface with boundary}) with respect to this iterated semi-direct product, is known as {\it braid combing}.

\begin{definition}\label{D:cnf_with_boundary}
Let $S$ be an orientable surface of genus $g$ with $p>0$ boundary components. The {\bf combed normal form} of a braid $\alpha \in P_n(S)$ is a decomposition
$$
   \alpha=\alpha_1\alpha_2\cdots \alpha_n
$$
where, for $k=1,\ldots,n$, $\alpha_k$ belongs to the subgroup generated by $\{A_{i,j_k}\}_{i<j_k}$, with $j_k=(2g+p-1)+k$.
\end{definition}

By the uniqueness of normal forms with respect to semi-direct products, the combed normal form of a braid is unique. Also, since each of the subgroups described in \cref{D:cnf_with_boundary} is a free group on the given generators, one can choose a unique reduced word $w_i$  to represent each $\alpha_i$, and this gives a unique word representing $\alpha$.

We now describe explicitly how to compute the combed normal form of a pure braid $\alpha$ represented by a given word $w$ in the generators of $P_n(S)$. For that purpose, we just need to find a way to {\it move} the letters $A^{\pm 1}_{i,j}$ with smaller second index to the left of those with bigger second index. This can be achieved by using the (PR/ER)-relations to replace two consecutive letters $AB$ where $A=A^{\pm 1}_{r,s}$,  $B=A^{\pm 1}_{i,j}$ and $j<s$, with the word $B W$, where $W$ is a word formed by letters whose second subindex equals $s$. Iteratively applying these substitutions (together with free reduction), the word $w$ can be transformed into a reduced word of the form $w_1w_2\cdots w_n$, which also represents $\alpha$, such that the second subindex of every letter of $w_k$ is $j_k$, for $k=1,\ldots,n$. The decomposition $\alpha=\alpha_1\cdots \alpha_n$, where $\alpha_k$ is the braid represented by $w_k$ for $k=1,\ldots,n$, is thus the combed normal form of $\alpha$.

We will see in \cref{subsectionexponencial} that the length of the word $w_1\cdots w_n$ is possibly exponential with respect to the length of $w$. But we will show that it is possible to represent $w_1\cdots w_n$ in a {\it compressed} way. The key point consists of moving the letters as explained in the above paragraph but, instead of applying the conjugations described in (PR/ER)-relations, we will keep track of those conjugations without applying them. We will now show how to do this. To avoid cumbersome notation we use the expression $u^v$ to denote the word $v^{-1} u v$, with $u$ and $v$ two given words.

Suppose that $w=u_1u_2\cdots u_m$, where $u_t=A^{\pm 1}_{i_t,s_t}$ for $t=1,\ldots,m$. Thus $w$ is a sequence of $m$ letters. For every $t$, let $v_t$ be the subsequence of $w$ formed by those letters $u_r$ such that $r>t$ and $s_r<s_t$. In other words, $v_t$ is formed by the letters of $w$ which come after $u_t$ and have smaller second index. Notice that $v_t$ is formed precisely by the letters of $w$ that must be swapped with $u_t$ when combing the braid.

By construction, we have the following:

\begin{lemma}
Let $w=u_1u_2\cdots u_m$ be as above, representing a braid $\alpha$. Given $k\in \{1,\ldots,n\}$ let $u_{i_1}u_{i_2}\cdots u_{i_t}$ be the subsequence of $w$ formed by those letters whose second subindex is $j_k$. Define:
$$
    \overline w_k = u_{i_1}^{v_{i_1}}u_{i_2}^{v_{i_2}}\cdots u_{i_t}^{v_{i_t}}.
$$
Then $\overline w_k$ represents the same braid as $w_k$, so $\overline w=\overline w_1\cdots \overline w_n$ represents the combed normal form of $\alpha$.
\end{lemma}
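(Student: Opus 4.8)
Here is how I would prove the statement.

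\smallskip

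The plan is to argue by induction on the length $m$ of $w=u_1\cdots u_m$. If $w$ is empty there is nothing to prove, so assume $m\geq 1$ and write $w=w'u_m$ with $w'=u_1\cdots u_{m-1}$ and $u_m=A_{i_m,s_m}^{\pm 1}$. Every second subindex equals one of $j_1,\dots,j_n$, so $s_m=j_c$ for a unique $c$ (I will say a word has \emph{level} $j_l$ when all its second subindices equal $j_l$). Applying the inductive hypothesis to $w'$ gives: the combed factors $w'_1,\dots,w'_n$ of the braid $\alpha'$ represented by $w'$, where $w'_k$ has level $j_k$; and the words $\overline{w'}_k$ obtained by applying the construction of the statement to $w'$, with $\overline{w'}_k$ representing the same braid $\alpha'_k$ as $w'_k$ for every $k$.

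\smallskip

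The key step is to comb $w$ in a convenient order. First comb $w'$: since the (PR/ER)-moves used for this only involve the letters of $w'$, they never touch $u_m$, and they turn $w$ into $w'_1\cdots w'_n\,u_m$ (with $w'_1,\dots,w'_n$ literally the combed normal form of $w'$). Now push $u_m$ to the left. As $u_m$ has level $j_c$ and each block $w'_l$ with $l>c$ has level $j_l>j_c$, the letter $u_m$ sweeps leftwards past $w'_{c+1},\dots,w'_n$; each crossing is an instance of a (PR/ER)-relation (up to inversion), namely the rewriting of a conjugate of some $A_{r,s}^{\pm1}$ by $u_m$, valid because $j_c<s$. A direct computation shows that once $u_m$ has crossed the whole block $w'_l$, that block has been replaced by $(w'_l)^{u_m}$, which \cref{remarkrelations} lets us rewrite as a word of level $j_l$. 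Hence $w$ is transformed into
$$
   w'_1\cdots w'_{c-1}\;\bigl(w'_c\,u_m\bigr)\;(w'_{c+1})^{u_m}\cdots(w'_n)^{u_m},
$$
whose $k$-th block has level $j_k$; by uniqueness of the combed normal form (\cref{D:cnf_with_boundary} and the lines following it) this \emph{is} the combed normal form of $\alpha$, so, as braids, $w_k=w'_k$ for $k<c$, \ $w_c=w'_c\,u_m$, \ and $w_k=(w'_k)^{u_m}$ for $k>c$.

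\smallskip

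Finally I compare $\overline{w}_k$ with $w_k$, and this is exactly where the definition of the $v_t$ pays off: passing from $w'$ to $w=w'u_m$ changes the subsequences $v_t$ only through the single letter $u_m$, which sits at the very end and has level $j_c$. Thus for a level-$j_k$ letter $u_{i_a}$ of $w'$ one has $v_{i_a}=v'_{i_a}$ if $j_k\leq j_c$ and $v_{i_a}=v'_{i_a}u_m$ if $j_k>j_c$, while for $k=c$ the new letter $u_m$ is itself a level-$j_c$ letter of $w$ with empty associated subsequence, contributing the factor $u_m$. Splitting into the cases $k<c$, $k=c$, $k>c$ then yields $\overline{w}_k=\overline{w'}_k$, \ $\overline{w}_c=\overline{w'}_c\,u_m$, \ and $\overline{w}_k=\bigl(\prod_a u_{i_a}^{v'_{i_a}}\bigr)^{u_m}=(\overline{w'}_k)^{u_m}$ respectively; comparing with the three formulas for $w_k$ and using the inductive hypothesis, $\overline{w}_k$ and $w_k$ represent the same braid in each case. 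The concatenation statement $\overline{w}=\overline{w}_1\cdots\overline{w}_n$ follows immediately.

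\smallskip

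The only delicate point I anticipate is the bookkeeping in the middle step: checking that sweeping $u_m$ past an already-combed block $w'_l$ produces precisely $(w'_l)^{u_m}$ (so the conjugations accumulate in the correct order and the outcome genuinely lies at level $j_l$), and observing that it is legitimate to comb $w'$ completely \emph{before} inserting $u_m$ — which holds exactly because the combed normal form is unique and hence independent of the order in which the (PR/ER)-moves are applied. Once this is granted, the remainder is the routine unwinding of the definition of the $v_t$, which was set up for this purpose.
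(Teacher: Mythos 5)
Your proof is correct. The paper itself gives no argument for this lemma beyond the phrase ``by construction'': the intended justification is precisely the bookkeeping you carry out, namely that each letter of level $j_k$ ends up conjugated by exactly the later letters of smaller second index that are swapped past it. Your induction on the last letter --- using that conjugation by a smaller-level generator preserves each level-$j_l$ free factor (via the (PR/ER)-relations) together with the uniqueness of the normal form for the iterated semi-direct product --- is a sound and somewhat more careful formalization of that same idea, so there is nothing to object to.
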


\begin{example}\label{excodificar}
If $w = A_{1,2}A_{1,4}A_{1,2}A_{2,3}^{-1}A_{2,4}A_{1,3}A_{1,2} \in P_4$, we have
$$
\overline{w} = \underbrace{A_{1,2}A_{1,2}A_{1,2}}_{\overline{w}_2} \underbrace{{A_{2,3}^{-1}}^{A_{1,2}}A_{1,3}^{A_{1,2}}}_{\overline{w}_3} \underbrace{A_{1,4}^{A_{1,2}A_{2,3}^{-1}A_{1,3}A_{1,2}}A_{2,4}^{A_{1,3}A_{1,2}}}_{\overline{w}_4}.
$$
Note that in $P_4$ we have $g=0$ and $p=1$, so $j_k=k$. There are no letters of the form $A_{i,1}$, that is why $\overline w_1$ is the trivial word.
\end{example}

The word $\overline w=\overline w_1\cdots \overline w_n$ is not too long with respect to $w$: its length is at most $m^2$. Indeed, the worst case occurs if we need to swap every pair of letters of $w$, so each $v_i$ has length $m-i$, and thus the length of $\overline w$ is $m+2\frac{m(m-1)}{2}= m^2$.

Let us see how we can obtain enough information to describe each $\overline w_k$, by a procedure which is linear in $m$.

\begin{lemma}\label{L:short_w_k_bar_with_boundary}
Given $w=u_1\cdots u_m$ as above, and given $k\in \{1,\ldots, n\}$, one can compute in time $O(m)$ a subsequence $v$ of $w$ and a list of pairs of integers $(c_1,d_1),\ldots,(c_t,d_t)$, where $1\leq |c_r| \leq m$ and $0\leq d_r\leq m$ for each $r=1,\ldots,t$ $(t<m)$, which encodes all the information to describe $\overline w_k$.
\end{lemma}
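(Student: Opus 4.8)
The key observation is that, for a fixed $k$, the conjugating words $v_{i_1}, v_{i_2},\ldots,v_{i_t}$ are \emph{nested final segments of the single word $v:=v_{i_1}$}. Indeed, $v_{i_r}$ is by definition the subsequence of $w$ formed by the letters $u_\ell$ with $\ell>i_r$ and $s_\ell<j_k$, listed by increasing $\ell$; since $i_1<i_2<\cdots<i_t$ and the condition $s_\ell<j_k$ does not depend on $r$, every letter of $v_{i_{r+1}}$ already occurs in $v_{i_r}$, and the letters of $v_{i_r}$ that are dropped on passing to $v_{i_{r+1}}$ are exactly those at positions $\le i_{r+1}$, which form an initial block. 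Writing $v=g_1g_2\cdots g_L$ (so $L\le m$), we therefore have $v_{i_r}=g_{a_r+1}g_{a_r+2}\cdots g_L$, where $a_r$ denotes the number of letters of $v$ occurring at positions $\le i_r$ in $w$; in particular $a_1=0$. Hence
\[
   \overline w_k \;=\; \prod_{r=1}^{t} (g_{a_r+1}\cdots g_L)^{-1}\, u_{i_r}\, (g_{a_r+1}\cdots g_L),
\]
so the word $v$ together with the pairs $(c_r,d_r):=(\varepsilon_r\, i_r,\ a_r)$, where $\varepsilon_r\in\{+1,-1\}$ is the exponent of $u_{i_r}$, determines $\overline w_k$ completely (one recovers $u_{i_r}$ by reading position $|c_r|$ of $w$). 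The required size bounds are then immediate: $1\le|c_r|=i_r\le m$ and $0\le d_r=a_r\le L\le m$, and the list has $t\le m$ pairs.

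It remains to produce $v$ and the list of pairs in time $O(m)$. The plan is to make a single left-to-right scan of $u_1,u_2,\ldots,u_m$, maintaining a boolean flag (has a letter with second subindex $j_k$ been seen yet?), an integer counter $c$ holding the current length of the partially built $v$, the word $v$ itself, and the output list of pairs. At step $t$: if $s_t=j_k$, append the pair $(\varepsilon_t\, t,\ c)$ to the output list and turn the flag on; if $s_t<j_k$ and the flag is on, append $u_t$ to $v$ and increment $c$; otherwise do nothing. Each step costs $O(1)$ (at most one append to each list and one increment), so the whole scan runs in time $O(m)$, and on termination $v$ has length $\le m$ and the output list has length $t\le m$.

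Correctness of the scan follows at once from the definitions: while the flag is off the scan is at positions $\le i_1$, so nothing is added to $v$; once it is turned on (at position $i_1$) the letters appended to $v$ are precisely the $u_\ell$ with $\ell>i_1$ and $s_\ell<j_k$, that is, exactly $v_{i_1}=v$; and when the scan reaches position $i_r$ the counter $c$ equals the number of letters of $v$ occurring at positions $<i_r$, which equals $a_r$ because $u_{i_r}$, having second subindex $j_k$, is never one of those letters. Thus the emitted data is exactly $\bigl(v;\,(c_1,d_1),\ldots,(c_t,d_t)\bigr)$, and by the first paragraph it encodes $\overline w_k$. The argument is essentially bookkeeping; the one genuine point, and the main thing to get right, is the nesting observation of the first paragraph, since that is precisely what makes a compact, linearly computable encoding possible at all. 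The only other delicate point is the off-by-one in the definition of $a_r$ (whether $c$ counts positions strictly before $i_r$ or up to and including $i_r$), which is harmless here because letters with second subindex $j_k$ are never placed into $v$.
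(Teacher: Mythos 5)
Your proof is correct and follows essentially the same route as the paper: the decisive point in both is that each conjugator $v_{i_r}$ is a suffix of $v_{i_1}$, after which a linear scan of $w$ yields $v_{i_1}$ together with one pair of integers per letter of second subindex $j_k$. The only differences are harmless bookkeeping conventions: the paper stores the first subindex and sign of $u_{i_r}$ and the length of the suffix $v_{i_r}$ (in two passes), while you store the position of $u_{i_r}$ in $w$ and the length of the discarded prefix (in one pass); the two encodings are trivially interconvertible.
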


\begin{proof}
Going through the word $w=u_1\cdots u_m$ once, we can determine the position $i_1$ of the first letter with second index $j_k$, and the word $v=v_{i_1}$. We know that $\overline w_k$ will have the form $u_{i_1}^{v_{i_1}}u_{i_2}^{v_{i_2}}\cdots u_{i_t}^{v_{i_t}}$ and, by construction, that each $v_{i_r}$ is a suffix of $v_{i_1}$. Hence, in order to determine $v_{i_r}$ we just need to provide the word $v_{i_1}$ and the length $d_r$ of $v_{i_r}$. Also, in order to determine the letter $u_{i_r}$ we just need to provide its first index and the sign of its exponent ($\pm 1$). Hence we just need to provide an integer $c_r$ such that $|c_r|$ is the first index of $u_{i_r}$, and whose sign is equal to the exponent of $u_{i_r}$.

To obtain these integers, we go through $w$ again, starting at the position $i_1$ and setting $d$ equal to the length of $v_{i_1}$. Every time we read a letter $u$ whose second index is smaller than $j_k$, we decrease $d$ by one. Every time we read a letter $u$ whose second index is $j_k$, we store the pair $(c,d)$ where $|c|$ is the first index of $u$ and the sign of $c$ is given by the exponent of $u$ (positive if $u=A_{|c|,j_k}$, negative if $u=A^{-1}_{|c|,j_k}$), and $d$ is the above number. Notice that the first step stores $(c_1,d_1)$ where $|c_1|=i_1$ and $d_1$ is the length of $v_{i_1}$.

At the end of the whole procedure, we have gone through $w$ twice (so the complexity is $O(m)$), and we have obtained a word $v_{i_1}$ and a list of pairs of integers $(c_1,d_1),\ldots, (c_t,d_t)$ which encode $\overline w_k$ as desired.
\end{proof}

\begin{example}
Let $w$ be the word with 7 letters appearing in \cref{excodificar}. To describe $\overline{w}_3$ we store the word $A_{1,2}$ and the pairs $(-2,1), (1,1)$. To describe $\overline{w}_4$ we store the word $A_{1,2}A_{2,3}^{-1}A_{1,3}A_{1,2}$ and the pairs $(1,4), (2,2)$.
\end{example}

We have then obtained, for every $k=1,\ldots,n$, a {\it short} word $\overline w_k$ representing the factor $\alpha_k$ of the braid combing. But a word representing $\alpha_k$ will be useful only if we represent it as a reduced word $w_k$ in the generators $\{A_{i,j_k}\}_{i<j_k}$ of the corresponding free group. We can obtain $w_k$ from $\overline w_k$  by iteratively applying the (PR/ER)-relations: Given a word $u_i^{v_i}$ (suppose that the second subindex of $u_i$ is $j_k$), we can write $v_i=a_1\cdots a_t$; then we conjugate $u_i$ by $a_1$ using the (PR/ER)-relations, obtaining an element which can be written as a word whose letters have second index equal $j_k$. Next we conjugate this word by $a_2$, using the relations again (taking into account that conjugating a product by $a_2$ is the same as conjugating each factor by $a_2$). And so on. At the end, $u_i^{v_i}$ is transformed into a word in which the second index of every letter is $j_k$. After repeating this process with every word in $\overline w_k$ and applying free reduction, one obtains $w_k$.

This last step of braid combing is the exponential one! It can produce a word $w_k$ which is exponentially long with respect to $m$ (we will show such an example in \cref{Lema_exp}). For this reason, in \cref{sectioncompressed} we introduce a method based on straight line programs to avoid this last step of braid combing, and to solve the word problem for braids on surfaces with boundary in polynomial time.

\subsection{Braid combing is exponential}\label{subsectionexponencial}

In this section we provide an example to show that braid combing is, in general, an exponential procedure. That is, we present a family of pure braids $\beta_m$, $m\geq 1$, where each $\beta_m$ can be expressed as a word whose length is linear in $m$, but whose combed expression is a word of exponential length with respect to $m$. This implies that there is no hope to produce an efficient algorithm to comb braids, if one wishes to express combed braids as words in usual braid generators.

The example will be given in $P_4$, the pure braid group with 4 strands on the disc $\mathbb D^2$, with generators $A_{i,j}=\sigma_{j-1}\sigma_{j-2}\cdots \sigma_{i+1} \sigma_i^2 \sigma_{i+1}^{-1}\cdots \sigma_{j-1}^{-1}$, for $1\leq i < j \leq 4$. As $P_4$ embeds in $P_n(S)$ for $n\geq 5$ (where $S$ is an orientable surface, or $n\geq 4$ if $S$ has genus $g>0$) by an embedding which sends generators to generators (of the presentations defined in \cref{S:braids_on_surfaces}), it follows that, in general, braid combing is exponential in surface braid groups.

Given $m\geq 1$, define:
$$
     \beta_m=\left(A_{1,2}^{-1}A_{2,3}\right)^{-m}A_{3,4}\left(A_{1,2}^{-1}A_{2,3}\right)^{m}\in P_4.
$$
The length of the given word representing $\beta_m$ is $4m+1$, so it is linear in $m$. We can then show the following:

\begin{lemma}\label{Lema_exp}
With the above notation, the combed normal form of $\beta_m$ is a reduced word in $A_{1,4},A_{2,4},A_{3,4}$ and their inverses, whose length is exponential in $m$.
\end{lemma}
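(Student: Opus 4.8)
The plan is to comb $\beta_m$ explicitly and track the growth of the factor $\alpha_4$ (the part with second index $4$). Since $\beta_m = c^{-m} A_{3,4} c^m$ where $c = A_{1,2}^{-1}A_{2,3}$ lies in the subgroup generated by $\{A_{i,j} : j \le 3\}$, combing is essentially bookkeeping: the $c^{\pm m}$ factors contribute nothing to $\alpha_4$ directly, but they conjugate the single letter $A_{3,4}$. By the semi-direct product structure $P_4 = P_3 \ltimes \mathbb F_3$ (with $\mathbb F_3 = \langle A_{1,4}, A_{2,4}, A_{3,4}\rangle$), combing $\beta_m$ amounts to: (i) collecting $c^{-m}\cdot c^m = 1$ in the $P_3$-part, so the $P_3$-factor is trivial and hence $\overline w_2 = \overline w_3 = 1$ in reduced form; (ii) the $\mathbb F_3$-factor is $\alpha_4 = A_{3,4}^{\,\rho}$ where $\rho$ is the automorphism of $\mathbb F_3$ induced by conjugation by $c^m$ under the splitting, i.e. $\alpha_4 = c^{-m} A_{3,4}\, c^m$ interpreted inside $\mathbb F_3$ via the action. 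So the whole problem reduces to understanding the automorphism $\varphi$ of $\mathbb F_3$ given by conjugation by $c = A_{1,2}^{-1}A_{2,3}$, and showing $\varphi^m(A_{3,4})$ has word length exponential in $m$.

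First I would compute $\varphi$ on the three generators $A_{1,4}, A_{2,4}, A_{3,4}$ using the (PR/ER)-relations (here $g=0$, $p=1$, so only (PR1)–(PR4) and their primed versions apply). Conjugation by $A_{2,3}^{\pm1}$ and by $A_{1,2}^{\pm1}$ on each $A_{i,4}$ is given directly by (PR1)–(PR4)/(PR1$'$)–(PR4$'$) with $s=4$; composing these gives $\varphi(A_{i,4})$ as explicit words in $A_{1,4}, A_{2,4}, A_{3,4}$. The key structural point I expect: $\varphi$ acts on $\mathbb F_3$ in a way that, after abelianization $\mathbb Z^3$, is a matrix $M \in GL_3(\mathbb Z)$ with an eigenvalue of modulus $>1$ (a Perron–Frobenius-type or at least hyperbolic behaviour — one naturally expects something like a power of $\begin{psmallmatrix}1&1\\1&2\end{psmallmatrix}$ or a $3\times 3$ analogue appearing, since $A_{1,2}^{-1}A_{2,3}$ is a pseudo-Anosov-flavoured element). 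Then the abelianized exponent-sum vector of $\varphi^m(A_{3,4})$ is $M^m e_3$, whose entries grow like $\lambda^m$ for the dominant eigenvalue $\lambda$. Since the $\ell^1$-norm of the abelianization of a word is a lower bound for the word's length, and since reduced words in a free group have length bounded below by the abelianized $\ell^1$-norm, this forces the reduced word $w_4$ representing $\alpha_4$ to have length $\ge \|M^m e_3\|_1 \asymp \lambda^m$, exponential in $m$.

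The steps in order: (1) set up the semi-direct decomposition and reduce to computing $\alpha_4 = \varphi^m(A_{3,4})$; (2) compute $\varphi$ explicitly on generators via (PR)-relations; (3) abelianize to get the integer matrix $M$ and verify it has spectral radius $\lambda > 1$ (e.g. by exhibiting the characteristic polynomial and checking it has a root outside the unit circle, or just computing $M^m e_3$ by a linear recurrence and showing unbounded growth); (4) conclude via the $\ell^1$-lower-bound on free-group word length. I would also need to check that no catastrophic cancellation makes $\alpha_4$ trivial or short — but the abelianization argument sidesteps this entirely, since abelianized length is a genuine lower bound regardless of cancellation, which is precisely why routing through $M^m e_3$ is the clean way to do it.

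The main obstacle I anticipate is step (2)–(3): getting the automorphism $\varphi$ exactly right, since (PR3)/(PR4) produce length-five words with cancelling pieces, so composing the action of $A_{1,2}^{-1}$ after $A_{2,3}$ requires care; and then confirming the resulting $3\times 3$ (or effectively $2\times 2$, if $A_{3,4}$ stays in a rank-$2$ invariant piece) integer matrix genuinely has an eigenvalue off the unit circle rather than being (virtually) unipotent — if it were unipotent the growth would only be polynomial and the claim would fail, so this spectral check is the real content. A convenient shortcut, if available, is to note that the subgroup $\langle A_{2,4}, A_{3,4}\rangle$ (or $\langle A_{1,4}A_{2,4}A_{3,4},\ \dots\rangle$) is $\varphi$-invariant and the induced action there is the standard $\begin{psmallmatrix}2&1\\1&1\end{psmallmatrix}$-type map with $\lambda = \tfrac{3+\sqrt5}{2}$, giving Fibonacci-like exponential growth of exponent sums and finishing the proof.
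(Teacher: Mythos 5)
Your reduction to studying $\varphi^m(A_{3,4})$ inside the free group $\mathbb F_3=\langle A_{1,4},A_{2,4},A_{3,4}\rangle$ is exactly the paper's starting point, but the engine of your argument --- abelianize, find an eigenvalue of modulus $>1$, and bound the reduced length below by the $\ell^1$-norm of $M^me_3$ --- fails at precisely the point you flagged as ``the real content''. Conjugation by any pure braid sends each generator $A_{i,4}$ to a conjugate of itself inside $\mathbb F_3$: for $s=4$ the right-hand sides of (PR1)--(PR4) are all of the form $uA_{r,4}u^{-1}$ with $u$ a word in the $A_{\cdot,4}$ (and the ER relations do not occur since $g=0$). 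Hence the induced matrix on $H_1(\mathbb F_3)\cong\mathbb Z^3$ is the identity for every element of $P_3$, in particular for $c=A_{1,2}^{-1}A_{2,3}$; you can also read this off the explicit formulas for $\varphi(a_1),\varphi(a_2),\varphi(a_3)$, whose exponent-sum vectors are $e_1,e_2,e_3$. So $M=I$, $M^me_3=e_3$, and your lower bound is the constant $1$; the same vanishing kills the proposed ``shortcut'' via an invariant subgroup carrying a hyperbolic $2\times 2$ matrix. The exponential growth is carried entirely by the conjugating words, which are invisible in the abelianization, so the cancellation problem you hoped to sidestep comes straight back; a linear detection would require a finer representation (homology of a cover, Burau-type), and even then one would still need to convert matrix growth into a lower bound on reduced word length in $\mathbb F_3$.

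The paper's proof confronts the cancellation directly. Writing $a_i=A_{i,4}$, it sets $x=a_1$, $y=a_2a_3a_2^{-1}$, $w=a_1a_2a_3$, checks that $\varphi(w)=w$ (since $w$ commutes with $A_{1,2}$ and $A_{2,3}$), and that $\varphi(x)$ and $\varphi(y)$ are products of at least three letters from $\{x^{\pm1},y^{\pm1}\}$ separated by powers of $w$. Thus words of the form $z_1w^{e_1}z_2\cdots w^{e_{t-1}}z_t$ with $z_i\in\{x^{\pm1},y^{\pm1}\}$, $e_i=\pm1$, are preserved by $\varphi$ and their length $t$ at least triples under each application, while a direct check shows that free cancellation in such an expression is confined to two letters at each junction $x^{-1}w$ or $w^{-1}x$, so the reduced length is at least of order $t$. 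Induction then gives $t\ge 3^{m-1}$ for $\beta_m=\varphi^m(a_3)$, whence exponential length. To repair your outline you would have to replace steps (3)--(4) by some such non-abelian mechanism (or invoke the pseudo-Anosov growth argument the paper only alludes to); as written, the proposal proves no more than a constant lower bound.
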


\begin{proof}
We consider the inner automorphism $\phi$ of $P_4$ which consists of conjugating by $A_{1,2}^{-1}A_{2,3}$. Then $\beta_m=\phi^m(A_{3,4})$. Using the short exact sequences explained in the previous section, we see that $\phi$ induces an automorphism on the free group generated by $A_{1,4}$, $A_{2,4}$, $A_{3,4}$, so $\beta_m$ can be written as a word on these letters and their inverses.

The fact that $\beta_m$ is exponentially long when written as a reduced word in $A_{1,4},A_{2,4},A_{3,4}$ and their inverses, can be deduced from the fact that $A_{1,2}^{-1}A_{2,3}$ is a pseudo-Anosov braid in $P_3$. However, since we want to make this example as explicit as possible, we will provide an algebraic proof.

To simplify the notation, let us denote $a_i=A_{i,4}$ for $i=1,2,3$. From the presentation of $P_4$, one can check the following:
$$
\begin{array}{l}
  \phi(a_1)=a_2a_3a_2^{-1}a_3^{-1}a_2^{-1}a_1a_2a_3a_2a_3^{-1}a_2^{-1}
\\
  \phi(a_2)=a_2a_3a_2^{-1}a_3^{-1}a_2^{-1}a_1^{-1}a_2a_3a_2a_3^{-1}a_2^{-1}a_1a_2a_3a_2a_3^{-1}a_2^{-1}
\\
  \phi(a_3)=a_2a_3a_2^{-1}
\end{array}
$$
Let us consider the following three words: $x=a_1$, $y=a_2a_3a_2^{-1}$ and $w=a_1a_2a_3$. Then we can write:
$$
\begin{array}{l}
  \phi(a_1)=yw^{-1}xwy^{-1}
\\
  \phi(a_2)=yw^{-1}x^{-1}wy^{-1}wy^{-1}
\\
  \phi(a_3)=y
\end{array}
$$

 Suppose that we can decompose a braid in the following way:
$$
  \alpha= z_1w^{e_1}z_2w^{e_2}\cdots z_{t-1}w^{e_{t-1}}z_t,
$$
where $z_i\in\{x,y,x^{-1},y^{-1}\}$ and $e_i\in \{1,-1\}$ for $i=1,\ldots,t-1$. This expression of $\alpha$ is not necessarily reduced when written in terms of $\{a_i^{\pm 1}\}$, but the cancellations are only restricted to subwords of the form $x^{-1}w$ or $w^{-1}x$, and in these cases only two letters cancel, and no further cancellation is produced. Therefore, the length of the reduced word $\overline{\alpha}$ in $\{a_i^{\pm 1}\}$ associated to $\alpha$, is at least $2t-1$. If we moreover assume that $z_1=y$, then the length of $\overline{\alpha}$ is greater than $2t$. In this case, we say that $\alpha$ admits a $xyz$-decomposition of length $t$.

We point out that $\phi(w)=w$. This follows from the fact that $w=a_1a_2a_3= \sigma_3\sigma_2\sigma_1\sigma_1\sigma_2\sigma_3$ commutes with $A_{1,2}=\sigma_1^2$ and $A_{2,3}=\sigma_2^2$.

Finally, we conclude the proof by showing that $\beta_m$ admits a $xyz$-decomposition of length $t\geq 3^{m-1}$.


We proceed by induction on $m$. First, we check that $\beta_1=\phi(a_3)=y$, so the claim holds in the case $m=1$. Now suppose the claim is true for some $m\geq 1$, so $\beta_m=z_1w^{e_1}z_2w^{e_2}\cdots z_{t-1}w^{e_{t-1}}z_t$ for some $t\geq 3^{m-1}$. Then we have
$$
\beta_{m+1}=\phi(\beta_m) = \phi(z_1)w^{e_1}\phi(z_2)w^{e_2}\cdots \phi(z_{t-1})w^{e_{t-1}}\phi(z_t).
$$
Finally, since
$$
  \phi(x)=yw^{-1}xwy^{-1},\quad\mbox{and}\quad \phi(y)=yw^{-1}x^{-1}wy^{-1}w yw^{-1}yw^{-1}xwy^{-1},
$$
it follows that $\beta_{m+1}$ admits a $xyz$-decomposition, whose length is at least $3t$, thus greater than $3^m$.
\end{proof}

\section{Straight line programs}\label{sectionstraightline}

We have seen that combing a braid $\alpha$ consists of decomposing it as a product $\alpha_1\alpha_2\cdots \alpha_n$ in a suitable way. We have also seen that, in general, if $\alpha$ is given as a word of length $m$, the length of a factor $\alpha_i$ may be exponential in $m$. So, how could we make this procedure to have polynomial complexity? The answer is: We will not describe each $\alpha_i$ as a word in the generators of the corresponding free group. Instead, we will describe each $\alpha_i$ as a {\it compressed word} (also called {\it straight line program}).

The concept of straight line program is well known in Complexity Theory~\cite{BCS}, and has been used in Combinatorial Group Theory to reduce the complexity of decision problems~\cite{Schleimer}.  In this section we review the main aspects related to straight line programs, which we will call {\it compressed words}, following~\cite{Schleimer}.

Roughly speaking, a \textit{compressed word} $\mathbb{A}$ consists on two disjoint finite sets of symbols $\mathcal{F}$ and $\mathcal{A}$ (the latter is ordered), called terminal and non-terminal alphabets respectively, together with a set of production rules indicating how to rewrite each non-terminal character in $\mathcal{A}$ as a word in $\mathcal{F}$ and smaller characters of $\mathcal A$. In this way, the biggest non-terminal character can be rewritten, using the production rules, as a word in $\mathcal{F}$ that we denote $ev(\mathbb{A})$ (the {\it evaluation} of $\mathbb A$, or the {\it decompressed word}). So $\mathbb A$ can be seen as a small set of {\it instructions} on how to produce a long word $ev(\mathbb{A})$ in $\mathcal{F}$.

Here is the rigorous definition:

\begin{definition}
A compressed word (or straight line program) $\mathbb{A}$ consists on a finite alphabet $\mathcal{F}$ of terminal characters together with an ordered finite set of (non-terminal) symbols $\mathcal{A} = \{A_1, \ldots, A_n\}$, and a set of production rules $\mathcal{P} = \{A_i \rightarrow W_i \}_{1 \leq i \leq n}$ allowing to replace each non-terminal $A_i \in \mathcal{A}$ with its production: a (possibly empty) word $W_i \in (\mathcal{F} \cup \mathcal{A})^*$, where every non-terminal $A_j$ appearing in $W_i$ has index $j<i$. The greatest non-terminal character in $\mathbb{A}$, $A_n \in \mathcal{A}$, is called the root.
\end{definition}

The {\it evaluation} of the compressed word $\mathbb{A}$, $ev(\mathbb{A})$, is the (decompressed) word in $\mathcal{F}$ obtained by replacing successively every non-terminal symbol with the right-hand side of its production rule, starting from the root.

We define the {\it size} of a compressed word $\mathbb A$ as $|\mathbb A|=\sum_i |W_i|$. We can assume that $|\mathbb A|\geq \#(\mathcal F)$ (if a terminal character appears in no production rule, we can remove it from $\mathcal F$ as it will not appear in $ev(\mathbb A)$). We can also assume that $|\mathbb A|\geq \#(\mathcal A)$ (if a production rule transforms a non-terminal symbol into the empty word, we can remove every appearance of the non-terminal symbol from the whole compressed word). Therefore, the space needed to store $\mathbb A$ is at most $2 |\mathbb A|$, and this is the reason why $|\mathbb A|$ is called ``the size of $\mathbb A$''.

\begin{example}\label{excompressed}
Given $n\geq 1$, consider the following compressed word:
$$
\mathbb{A}_n:= \,  \left\<
\begin{array}{l}
\mathcal{F} = \{a\} \\
\mathcal{A} = \{A_1, \ldots, A_n \} \\
\mathcal{P} = \{A_i \rightarrow A_{i-1} A_{i-2}\}_{i = 3}^{n} \cup \{A_2 \rightarrow a\} \cup \{A_1 \rightarrow b\}
\end{array}
\right\>.
$$

The sequence of compressed words $\{\mathbb A_n\}_{n\geq 1}$ encodes the sequence of {\em Fibonacci words}. The first seven decompressed words $ev(\mathbb A_1),\ldots, ev(\mathbb A_7)$ are, respectively:
$$
b, \quad a, \quad ab, \quad aba, \quad abaab, \quad abaababa, \quad abaababaabaab.
$$
We see that every word is the concatenation of the two previous ones. Notice that $|\mathbb A_n|=2n-2$, while $|ev(\mathbb A_n)|=F_n$, the $n$-th Fibonacci number. So, in this example, the sizes of compressed words grow linearly in $n$, while the sizes of decompressed words grow exponentially.
\end{example}

Two compressed words $\mathbb A$ and $\mathbb B$ are said to be {\it equivalent} if $ev(\mathbb{A})=ev(\mathbb{B})$.

A crucial property is that a pair of compressed words can be {\it compared} without being decompressed:

\begin{theorem}\label{T:Plandowski}{\rm{\cite{Plandowski}}}
Given two compressed words $\mathbb{A}$ and $\mathbb{B}$, there is a polynomial time algorithm  in $|\mathbb{A}|$  and $|\mathbb{B}|$ which decides whether or not $ev(\mathbb{A}) = ev(\mathbb{B})$.
\end{theorem}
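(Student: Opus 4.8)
Since the statement is a theorem quoted from the literature, one legitimate option is simply to cite it; but here is how I would approach an actual proof. The first observation is that, although $ev(\mathbb{A})$ and $ev(\mathbb{B})$ may have length exponential in $|\mathbb{A}|+|\mathbb{B}|$, all the lengths $|ev(A_i)|$ of the words produced by the individual non-terminals are polynomial-size binary integers, computable in polynomial time by a single bottom-up pass through the production rules. So we may first test $|ev(\mathbb{A})|=|ev(\mathbb{B})|$ and answer ``no'' if this fails. After a polynomial-time preprocessing we may also assume each production rule has the form $A_i\to A_jA_k$ or $A_i\to a$ with $a\in\mathcal F$ (a Chomsky-like normal form), at the cost of only a polynomial blow-up of the sizes.

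The heart of the matter is a recursion that compares not just full productions but \emph{fragments}, a fragment being a substring $ev(A_i)[p\,..\,q]$ specified by $A_i$ together with the binary positions $p\le q$. To compare $ev(\mathbb{A})=ev(A)$ with $ev(\mathbb{B})=ev(B)$, where $A\to A_1A_2$ and $B\to B_1B_2$, assume $|ev(A_1)|\le |ev(B_1)|$ (the other case being symmetric): equality then forces $ev(A_1)$ to be a prefix of $ev(B_1)$, i.e. $ev(A_1)=ev(B_1)[1\,..\,|ev(A_1)|]$, and afterwards $ev(A_2)$ must match the remaining suffix of $ev(B_1)$ concatenated with $ev(B_2)$; thus the problem splits into two fragment comparisons on strictly smaller productions. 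Unwinding, every fragment ever produced is cut out of the parse structure of $\mathbb{A}$ or $\mathbb{B}$ at one of only polynomially many natural ``cut points'', so there are only polynomially many genuinely distinct fragment-pairs, provided we never recompute one — which we arrange by memoisation.

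The single place where a naive recursion fails to be polynomial is when two fragments of the \emph{same} word overlap and the comparison keeps sliding them against each other, producing an exponentially long chain of almost-identical subproblems. This is exactly where the Fine--Wilf periodicity theorem enters: if a word of length $\ell$ has two periods $p,q$ with $p+q\le \ell+\gcd(p,q)$, then it has period $\gcd(p,q)$. Detecting such a ``periodic forcing'' configuration lets one decide a whole family of shifted fragment comparisons simultaneously, by verifying a single primitive period (itself handled recursively on shorter fragments), collapsing the exponential chain to one step. Packaging this correctly — recognising the periodic configurations and proving that the memoised recursion visits only polynomially many states with polynomial work per state — is the main obstacle, and is the genuine technical core of Plandowski's argument.

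An alternative route, which I find conceptually cleaner, is Jeż's \emph{recompression}: repeatedly apply to \emph{both} programs the same two operations — replacing every maximal block $a^k$ by a fresh letter, and, for a carefully chosen partition $\Sigma=\Sigma_\ell\sqcup\Sigma_r$ of the current alphabet, replacing every occurrence of a pair $ab$ with $a\in\Sigma_\ell$, $b\in\Sigma_r$ by a fresh letter — each of which can be carried out directly on an SLP by ``popping'' boundary letters across non-terminals. A suitably derandomised choice of partition shrinks $|ev(\mathbb{A})|$ by a constant factor per round, so after $O(|\mathbb{A}|+|\mathbb{B}|)$ rounds both evaluations are single letters, which are compared outright; since identical operations are applied to the two programs, equality (and inequality) is preserved throughout. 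Here the main obstacle is instead the amortised analysis guaranteeing that the SLP sizes remain polynomial over all rounds despite each round introducing new letters and new production rules.
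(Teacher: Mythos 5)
The paper does not prove this statement at all: it is imported directly from Plandowski's work via the citation, so your first option---simply citing it---is exactly what the paper does. Your two sketches (Plandowski's memoised fragment comparison with Fine--Wilf periodicity, and Je\.{z}'s recompression) are faithful outlines of the known proofs, but, as you yourself acknowledge, each defers its technical core (the polynomial bound on the memoised recursion, respectively the amortised control of SLP sizes), so they should be regarded as commentary accompanying the citation rather than as a self-contained proof.
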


Now recall that we want to use compressed words to describe the factors $\alpha_1\cdots \alpha_n$ of a combed braid. The leftmost factor $\alpha_1$ does not need to be compressed, as it can be expressed as a subsequence of the original word. Each of the other factors, $\alpha_2,\ldots, \alpha_n$, belongs to a free group. So we would like to compress words representing elements of a free group.

In order for this procedure to be useful (for instance, to solve the word problem in braid groups), we need to be able to compare two {\it compressed} combed braids. This means that we need to determine whether two compressed words ($\mathbb A$ and $\mathbb B$) evaluate to words ($ev(\mathbb A)$ and $ev(\mathbb B)$) which represent the same element in a free group.  That is, we want to compare the uncompressed words not just as words, but as elements in a free group.

Fortunately, this problem has already been satisfactory solved:

\begin{theorem}\label{T:reduce_SLP}{\rm{\cite{Lohrey}}}
Given a compressed word $\mathbb{A}$, there exists a polynomial time algorithm producing a compressed word $\mathbb A_{red}$ such that $ev(\mathbb{A}_{red})$ is the free reduction of $ev(\mathbb{A})$.
\end{theorem}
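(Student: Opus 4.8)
The plan is to process the compressed word $\mathbb A$ bottom-up, maintaining for each non-terminal a compressed word whose evaluation is the free reduction of the word that non-terminal produces in $\mathbb A$. After a polynomial-size preprocessing we may assume $\mathbb A$ is in Chomsky normal form: every production rule is either $A_i\to A_jA_k$ with $j,k<i$, or $A_i\to a$ with $a\in\mathcal F$.

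The heart of the argument is a subroutine that, given compressed words $\mathbb U$, $\mathbb V$ whose evaluations $u=ev(\mathbb U)$ and $v=ev(\mathbb V)$ are both freely reduced, outputs in polynomial time a compressed word evaluating to the free reduction of $uv$. Write $\overline w$ for the formal inverse of a word $w$ (the word reversed, with every letter inverted); from a compressed word for $w$ one gets one of the same size for $\overline w$ by replacing each rule $A\to XY$ by $\overline A\to\overline Y\,\overline X$ and each $A\to a$ by $\overline A\to a\inv$. The free reduction of $uv$ is obtained by cancelling the maximal matching block at the junction: it equals $\mathrm{prefix}_{|u|-\ell}(u)\cdot\mathrm{suffix}_{|v|-\ell}(v)$, where $\ell$ is the largest integer with $\mathrm{suffix}_\ell(u)=\overline{\mathrm{prefix}_\ell(v)}$. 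This condition is monotone in $\ell$ — equal words have equal suffixes of any prescribed length — so $\ell$ can be located by binary search over $\{0,1,\dots,\min(|u|,|v|)\}$, each test amounting to an equality check between two compressed words. Forming the required prefixes, suffixes and formal inverses are routine operations on compressed words, and each equality check is polynomial by \cref{T:Plandowski}; maximality of $\ell$ ensures the output is again freely reduced, which gives correctness.

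The algorithm now applies this subroutine once for each non-terminal in increasing order: a rule $A_i\to a$ already evaluates to a reduced word, and for $A_i\to A_jA_k$ we feed the subroutine the compressed words already built for the reductions of $ev(A_j)$ and $ev(A_k)$; we output the compressed word obtained at the root $A_n$ as $\mathbb A_{red}$. Correctness of the induction rests on free reduction being a confluent, terminating rewriting system, so that $\mathrm{red}(xy)=\mathrm{red}\bigl(\mathrm{red}(x)\,\mathrm{red}(y)\bigr)$ and the order of reduction is irrelevant.

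The one genuinely delicate point — the main obstacle — is controlling the size: taking a prefix or suffix of a compressed word naively creates as many new non-terminals as its height, so iterating the subroutine over all $n$ non-terminals risks an exponential cascade of heights and sizes. One resolves this by keeping the representation in a controlled shape throughout: for instance by using composition systems, in which a production is also allowed to name a substring $A\to B[i..j]$ of an earlier symbol (so each truncation costs $O(1)$, and the equality test still runs in polynomial time, after a polynomial conversion back to ordinary compressed words), or alternatively by re-balancing after each step following Rytter so that the height stays $O(\log N)$ at only polynomial cost in size, where $N=|ev(\mathbb A)|$ and $\log N=O(|\mathbb A|)$. With this in place every intermediate compressed word has size polynomial in $|\mathbb A|$ and the total running time is polynomial.
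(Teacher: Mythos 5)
The paper gives no proof of this statement---it is quoted from Lohrey's work---and your argument is essentially a correct reconstruction of the standard proof of that result: bottom-up maintenance of compressed words for the freely reduced evaluations, locating the cancellation length at each concatenation by binary search (your monotonicity observation, that the matching-suffix condition is downward closed in $\ell$, is exactly the right justification, and each test is an equality check as in \cref{T:Plandowski}), and then the genuinely delicate size issue, which you correctly identify and resolve by the same device used in the literature (composition systems with substring rules plus the polynomial conversion back to ordinary straight line programs, or alternatively balanced grammars). I see no gap.
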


As a consequence, if we have two compressed words $\mathbb A$ and $\mathbb B$, we can compute $\mathbb A_{red}$ and $\mathbb B_{red}$ in polynomial time, and then we can check in polynomial time whether $ev(\mathbb{A}_{red})$ and $ev(\mathbb{B}_{red})$ are the same word (\cref{T:Plandowski}). Hence, the {\it compressed word problem} in a free group is solvable in polynomial time.

We will therefore be able to use compressed words to perform braid combing, and to compare combed braids, in polynomial time. We now proceed to describe how to compress the words appearing in the process of braid combing.


\section{Compressed braid combing}\label{sectioncompressed}

Throughout this section, $S$ will be a compact, connected orientable surface with $p>0$ boundary components. The arguments in this section can also be applied in a straightforward way if $S$ is non-orientable, just by using the appropriate presentations.

Recall that the combing algorithm for a braid $\alpha\in P_n(S)$ starts with a word $w=u_1\cdots u_m$ representing $\alpha$, where $u_k=A^{\pm 1}_{i,j}$ for $k=1,\ldots, m$, and produces $n$ words, $w_1,\ldots, w_n$, representing the factors $\alpha_1,\ldots, \alpha_n$ of the combed normal form of $\alpha$. Each $w_k$ is a reduced word formed by letters whose second subindex is $j_k$. Moreover, we gave a procedure to compute short words $\overline w_1, \ldots, \overline w_n$ representing the factors $\alpha_1,\ldots, \alpha_n$ of the combed normal form of $\alpha$ in polynomial time, only that the second index of a letter in $\overline w_k$ is not necessarily $j_k$.

In some sense $\overline w_k$ is a {\it compressed} expression of $w_k$. Notice that the word $\overline w_1$ is actually equal to $w_1$, and its length is at most $m$, so we do not need to use compression for this first factor. Let us study the other cases.

In this section we will see how, starting from $\overline w_k$, one can define a compressed word $\mathbb A_k$ (a straight line program) such that $w_k$ is the free reduction of $ev(\mathbb A_k)$. Moreover, the size of $\mathbb A_k$ will be of order $O((g+p+n)m)$. This will allow us to determine and compare the factors of the combed normal forms of braids in $P_n(S)$, without needing to evaluate them, so it solves the word problem in $P_n(S)$ in polynomial time.

Let $k\in \{2,\ldots,n\}$, and recall that $\overline{w}_k = u_{i_1}^{v_{i_1}} \cdots u_{i_t}^{v_{i_t}}$, where each $u_{i_r}$ belongs to $\{A^{\pm 1}_{i,j_k}\}_{i<j_k}$, and each $v_{i_r}$ is a word formed by letters from $\{A^{\pm 1}_{i,j_s}\}_{i<j_s<j_k}$. Recall also that, by construction, each $v_{i_r}$ is a suffix of $v_{i_1}$. Also, $t\leq m$ and the length of $v_{i_1}$ is smaller than $m$ (where $m$ is the length of $w$).

By \cref{L:short_w_k_bar_with_boundary}, we can obtain in time $O(m)$ a subsequence $v_{i_1}$ of $w$ and a sequence of pairs of integers $(c_1,d_1), \ldots, (c_t,d_t)$ which encode $\overline w_k$.

We want to define a compressed word $\mathbb{A}_k$ representing $\alpha_k$, so the terminal alphabet will consist of the generators of $P_n(S)$ corresponding to the motion of the point $p_k$, that is, $\{A_{i,j_k}^{\pm 1}\}_{i<j_k}$. To be consistent with the forthcoming notation, we will denote $X_{i,0}=A_{i,j_k}$ and $X_{-i,0}=A^{-1}_{i,j_k}$, for $i=1,\ldots,j_k-1$. So the terminal alphabet of $\mathbb A_k$ becomes $\mathcal F_k=\{X_{i,0}\}_{0<|i|<j_k}$.

On the other hand, the non-terminal symbols of $\mathbb A_k$ will consist of a single symbol $X_k$ (the root), plus a symbol corresponding to $u^v$ for each $u\in \mathcal F_k$ and each nontrivial suffix $v$ of $v_{i_1}$. We know that the word $u^v$ can be determined by a pair of integers $(c,d)$, where $u=X_{c,0}$ and $v$ is the suffix of $v_{i_1}$ of length $d$. Hence, the set of nonterminal symbols is
$$
  \mathcal A_k=\{X_{c,d}\}_{\genfrac{}{}{0pt}{}{0<|c|<j_k}{0 < d \leq |v_{i_1}|}}\cup \{X_k\}.
$$
We order the elements of $\mathcal A_k$ (distinct form $X_k$) according to their second subindex and, in case of equality, according to their first subindex. $X_k$ is the biggest element, being the root.

The first production rule of $\mathbb A_k$ will be
$$
    X_k \rightarrow X_{c_1,d_1}\cdots X_{c_t,d_t}.
$$

Now let $X_{c,d}$ be a non-terminal symbol corresponding to a word $u^v$, let $a_1$ be the first letter of $v$, and write $v=a_1 v'$, so $u^v = (u^{a_1})^{v'}$. We know, from the relations in \cref{teoprespure1} and \cref{remarkrelations}, that the braid represented by $u^{a_1}$ can be written as $b_1\cdots b_s$, where each $b_i\in \mathcal F_k$ and $s\leq 9$. Therefore, the braid represented by $u^v$ can be written as $b_1^{v'}\cdots b_s^{v'}$, which we can encode as $X_{e_1,d-1}\cdots X_{e_s,d-1}$ for some integers $e_1,\ldots, e_s$. We thus add the following production rule to $\mathbb A_k$:
$$
    X_{c,d} \rightarrow X_{e_1,d-1}\cdots X_{e_s,d-1}.
$$
Adding these production rules for all non-terminal symbol $X_{c,d}$ ($d>0$) determines the compressed word $\mathbb A_k$.

\begin{proposition}\label{P:size_and_evaluation_with_boundary}
The size of $\mathbb A_k$ is smaller than $19(2g+p+n)m$, and the evaluation $ev(\mathbb A_k)$ represents $\alpha_k$.
\end{proposition}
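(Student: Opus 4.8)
The plan is to bound the size $|\mathbb A_k| = \sum |W_i|$ by counting the production rules and their lengths, and then to verify correctness of the evaluation by unwinding the recursion. For the size bound, recall that $\mathbb A_k$ has one root production rule $X_k \to X_{c_1,d_1}\cdots X_{c_t,d_t}$, whose length is $t \le m$, plus one production rule for each non-terminal symbol $X_{c,d}$ with $d > 0$. The number of such symbols is at most $2(j_k-1)\cdot |v_{i_1}|$, since the first index $c$ ranges over $\pm 1, \ldots, \pm(j_k-1)$ and $d$ ranges over $1, \ldots, |v_{i_1}|$. Using $j_k = (2g+p-1)+k \le 2g+p+n$ and $|v_{i_1}| < m$, this gives fewer than $2(2g+p+n)m$ non-terminal symbols (other than $X_k$). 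Each associated production rule $X_{c,d} \to X_{e_1,d-1}\cdots X_{e_s,d-1}$ has length $s \le 9$, by the observation (from \cref{teoprespure1} and \cref{remarkrelations}) that conjugating a single generator $u \in \mathcal F_k$ by a single letter $a_1$ and rewriting via the (PR/ER)-relations produces a word of at most $9$ letters, all with second index $j_k$. Hence
\[
  |\mathbb A_k| \;\le\; t + 9\cdot 2(2g+p+n)m \;<\; m + 18(2g+p+n)m \;<\; 19(2g+p+n)m,
\]
as claimed (using $m \le (2g+p+n)m$ for the first term).

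For the statement that $ev(\mathbb A_k)$ represents $\alpha_k$, I would argue by downward induction on $d$ that the non-terminal symbol $X_{c,d}$ evaluates (after interpreting the terminals $X_{i,0} = A_{i,j_k}^{\pm 1}$ as braid generators) to a word representing the braid $u^v$, where $u = X_{c,0}$ and $v$ is the suffix of $v_{i_1}$ of length $d$. The base case $d = 0$ is immediate since $X_{c,0}$ is a terminal. For the inductive step, write $v = a_1 v'$ with $|v'| = d-1$, so that $u^v = (u^{a_1})^{v'}$; the braid $u^{a_1}$ equals the braid represented by $b_1\cdots b_s$ with $b_i \in \mathcal F_k$ via the (PR/ER)-relations, hence $u^v$ is represented by $b_1^{v'}\cdots b_s^{v'}$, which is exactly what the production rule $X_{c,d} \to X_{e_1,d-1}\cdots X_{e_s,d-1}$ encodes once we apply the inductive hypothesis to each $X_{e_i,d-1}$. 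Applying this at the top level, the root rule $X_k \to X_{c_1,d_1}\cdots X_{c_t,d_t}$ evaluates to a word representing $u_{i_1}^{v_{i_1}}\cdots u_{i_t}^{v_{i_t}} = \overline w_k$, and by the lemma preceding \cref{excodificar} this represents the same braid as $w_k$, namely $\alpha_k$.

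The main point requiring care — the only genuine obstacle — is the bound $s \le 9$ on the length of the rewriting of $u^{a_1}$. One must check, going through each of the twelve (PR/ER)-relations in \cref{teoprespure1} and \cref{remarkrelations}, that whenever $a_1 = A_{r,s'}^{\pm 1}$ with second index $s' < j_k$ and $u = A_{i,j_k}^{\pm 1}$, the conjugate $u^{a_1} = a_1^{-1} u\, a_1$ (or $a_1 u a_1^{-1}$, depending on the sign of the exponent on $a_1$) can be rewritten, using one of these relations applied in the appropriate direction, as a word in $\{A_{i,j_k}^{\pm 1}\}$ of length at most $9$. Inspecting the right-hand sides, the longest is the nine-letter word in (PR4)/(PR4$'$), and one must also confirm that the relations genuinely cover all the cases of indices that can arise (in particular the even/odd and $\le 2g$ conditions governing (ER1), (ER1$'$), (ER2), (ER2$'$)), so that the rewriting is always possible; this is where the careful bookkeeping of the presentation pays off. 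Everything else is routine counting.
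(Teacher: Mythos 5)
Your proof is correct and follows essentially the same route as the paper's: bound the root rule by $t\le m$, count the non-root non-terminals by $2(j_k-1)|v_{i_1}|$ with each rule of length at most $9$ (the paper uses $|v_{i_1}|\le m-1$ and $j_k-1\le 2g+p+n-2$ to get the strict bound, which also repairs your slightly loose chaining of strict and non-strict inequalities), and prove correctness of $ev(\mathbb A_k)$ by induction on $d$ starting at $d=0$, concluding via the lemma identifying $\overline w_k$ with $\alpha_k$. Only note that what you call ``downward induction'' is ordinary induction on $d$ from the base case $d=0$, and that the bound $s\le 9$ is indeed just an inspection of the (PR/ER)-relations, as the paper assumes in the construction preceding the proposition.
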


\begin{proof}
The first production rule of $\mathbb A_k$ has length $t \leq m$. The length of the other production rules is at most 9, and there are as many as elements in $\mathcal A_k\backslash \{X_k\}$, that is, $2(j_k-1)|v_{i-1}|=2(2g+p+k-2)|v_{i-1}|< 2(2g+p+n)(m-1)$. Hence $|\mathbb A_k|\leq m+18(2g+p+n)(m-1)< 19(2g+p+n)m$.

By induction on $d$ (starting with $d=0$), we see that each $X_{c,d}$ evaluates to a word which represents the same braid as $u^v$ (where $u^v$ is the word corresponding to the symbol $X_{c,d}$). Hence, the evaluation of $X_k$ is a word in $\mathcal F_k$ which represents the same braid as $u_{i_1}^{v_{i_1}}\cdots u_{i_t}^{v_{i_t}}$, that is, $\alpha_k$.
\end{proof}

\begin{theorem}\label{teofinal}
Let $S$ be a connected surface of genus $g\geq 0$ with $p>0$ boundary components. Let $\alpha\in P_n(S)$ be a braid given as a word $w$ of length $m$ in the generators $\{A^{\pm 1}_{i,j}\}_{i<j}$. Then, for every $k=1,\ldots,n$, there is an algorithm of complexity $O((g+p+n)m)$ which produces a compressed word $\mathbb A_k$ of size at most $19(2g+p+n)m$ and terminal characters $\{A^{\pm 1}_{i,j_k}\}_{i<j_k}$, whose evaluation represents $\alpha_k$, the $k$-th factor of the combed normal form of $\alpha$.
\end{theorem}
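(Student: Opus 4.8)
The plan is to assemble the statement directly from \cref{L:short_w_k_bar_with_boundary} and \cref{P:size_and_evaluation_with_boundary}; essentially nothing new is needed beyond checking that the \emph{construction} of $\mathbb A_k$, and not just its output, runs within the claimed bound. Fix $k\in\{1,\ldots,n\}$. First I would apply \cref{L:short_w_k_bar_with_boundary} to obtain, in time $O(m)$, the subsequence $v_{i_1}$ of $w$ (of length $<m$) together with the list of pairs $(c_1,d_1),\ldots,(c_t,d_t)$ that encodes $\overline w_k$, and I would store $v_{i_1}$ as an array so that each of its letters can be read in constant time. Then I would build $\mathbb A_k$ exactly as in the construction preceding \cref{P:size_and_evaluation_with_boundary}: terminal alphabet $\mathcal F_k=\{X_{i,0}\}_{0<|i|<j_k}$; non-terminals $\mathcal A_k=\{X_{c,d}\}\cup\{X_k\}$, ordered by second subindex and then by first subindex, with $X_k$ the root; the root production $X_k\rightarrow X_{c_1,d_1}\cdots X_{c_t,d_t}$; and, for every non-terminal $X_{c,d}$ with $d>0$, the production $X_{c,d}\rightarrow X_{e_1,d-1}\cdots X_{e_s,d-1}$ obtained by rewriting $u^{a_1}$, where $u=X_{c,0}$ and $a_1$ is the first letter of the length-$d$ suffix of $v_{i_1}$ (that is, the letter of $v_{i_1}$ in position $|v_{i_1}|-d+1$).

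The rewriting of $u^{a_1}$ is where the (PR/ER)-relations enter: since $a_1$ has second subindex strictly smaller than $j_k$, which is the second subindex of $u$, exactly one of the twelve relations of \cref{teoprespure1} and \cref{remarkrelations} applies --- the unprimed or the primed version according to the sign of $a_1$, applied to $u$ or to the positive generator underlying $u$ (with a final inversion of the output in the latter case) according to the sign of $u$ --- and it expresses $u^{a_1}$ as a word $b_1\cdots b_s$ in $\{A^{\pm1}_{i,j_k}\}$ with $s\leq 9$. One then sets $X_{e_i,d-1}$ to encode $b_i^{v'}$, where $v'$ is the length-$(d-1)$ suffix of $v_{i_1}$, taking $|e_i|$ to be the first index of $b_i$ and the sign of $e_i$ to be the exponent of $b_i$. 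With $\mathbb A_k$ so constructed, the size estimate $|\mathbb A_k|<19(2g+p+n)m$ and the statement that $ev(\mathbb A_k)$ represents $\alpha_k$ are exactly \cref{P:size_and_evaluation_with_boundary}. For the running time: \cref{L:short_w_k_bar_with_boundary} and the array storage cost $O(m)$; writing the root production costs $O(t)=O(m)$; and each of the at most $2(2g+p+n)(m-1)$ remaining productions costs $O(1)$, because locating $a_1$ in the array is $O(1)$ and selecting and instantiating the relevant (PR/ER)-relation is a bounded case analysis whose output indices $e_1,\ldots,e_s$ are explicit functions of the first indices and signs of $u$ and $a_1$. Hence the total cost is $O((g+p+n)m)$. (When $k=1$ one has $|v_{i_1}|=0$, so $\mathbb A_1$ consists of the root alone, whose production is the subsequence $\overline w_1=w_1$ itself, consistently with the fact that the first combing factor needs no compression.)

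I do not expect a real obstacle here: this theorem is the synthesis of the two preceding results, and its substance is simply careful bookkeeping of constants. The one point that deserves attention is the claim that each secondary production $X_{c,d}\rightarrow X_{e_1,d-1}\cdots X_{e_s,d-1}$ is produced in constant time; this rests on the (PR/ER)-relations being finite in number with right-hand sides of bounded length, so the rule is given by a fixed lookup table indexed by the relative positions of the first indices of $u$ and $a_1$ (together with the parities and the threshold $2g$ occurring in those relations), and on having $v_{i_1}$ available as an array so the letter $a_1$ required at each node is retrieved in $O(1)$. Finally, once $\mathbb A_k$ is available for all $k$, reducing each via \cref{T:reduce_SLP} and comparing via \cref{T:Plandowski} solves the word problem in $P_n(S)$ in polynomial time, as announced.
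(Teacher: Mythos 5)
Your argument is correct and follows the paper's own proof: the theorem is obtained by combining \cref{L:short_w_k_bar_with_boundary} with \cref{P:size_and_evaluation_with_boundary}, observing that each secondary production rule is just a transcription of the appropriate (PR/ER)-relation and hence costs constant time, so the total complexity is proportional to the size of $\mathbb A_k$. Your write-up merely makes the bookkeeping (array storage of $v_{i_1}$, lookup of the relation, counting of productions) more explicit than the paper does.
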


\begin{proof}
This results follows from \cref{L:short_w_k_bar_with_boundary} and \cref{P:size_and_evaluation_with_boundary}, taking into account that computing the production rule corresponding to each $u^v$ just requires transcribing the relation in \cref{teoprespure1} and \cref{remarkrelations} corresponding to $u$ and to the first letter of $v$, so the complexity of the whole algorithm is proportional to the size of $\mathbb A_k$.
\end{proof}

\begin{example}
Consider the word $w = A_{1,4}A_{1,3}A_{2,4}^{-1}A_{1,2} \in P_4$. If we set $k=4$, we have $i_1=1$ and $v_{i_1}=A_{1,3}A_{1,2}$ which has length~2. The word $\overline w_4$, which is equal to $A_{14}^{A_{1,3}A_{1,2}}(A_{2,4}^{-1})^{A_{1,2}}$, can be codified as $X_{1,2}X_{-2,1}$. The production rules for $\mathbb A_4$ are:
$$
     X_4\rightarrow X_{1,2}X_{-2,1}
$$
$$
  \begin{array}{lll}
    X_{1,2}\rightarrow X_{1,1}X_{3,1}X_{1,1}X_{-3,1}X_{-1,1}    &  &   X_{-2,1}\rightarrow X_{1,0}X_{-2,0}X_{-1,0} \\ \\
    X_{1,1}\rightarrow X_{1,0}X_{2,0}X_{1,0}X_{-2,0}X_{-1,0}  &  & X_{3,1}\rightarrow X_{3,0} \\ \\
    X_{-1,1}\rightarrow X_{1,0}X_{2,0}X_{-1,0}X_{-2,0}X_{-1,0}  &  & X_{-3,1}\rightarrow X_{-3,0} \\ \\
  \end{array}
$$

The word $ev(\mathbb A_4)$ is $$X_{1,0}X_{2,0}X_{1,0}X_{-2,0}X_{-1,0}X_{3,0}X_{1,0}X_{2,0}X_{1,0}X_{-2,0}X_{-1,0}X_{-3,0}X_{1,0}X_{2,0}X_{-1,0}X_{-2,0}X_{-1,0},$$ corresponding to $$A_{1,4}A_{2,4}A_{1,4}A^{-1}_{2,4}A^{-1}_{1,4}A_{3,4}A_{1,4}A_{2,4}A_{1,4}A^{-1}_{2,4}A^{-1}_{1,4}A^{-1}_{3,4}A_{1,4}A_{2,4}A^{-1}_{1,4}A^{-1}_{2,4}A^{-1}_{1,4}.$$
\end{example}

Recall that the compressed braid combing explained throughout this section can be applied to the case when $S$ is a non-orientable surface with $p>0$ boundary components (one just needs to modify the production rules so they encode the appropiate relations given in \cite[Theorem 3]{GoncGuaschNonOrient} instead of the $(PR/ER)$-relations). Since the length of each of those relations is at most 9, the bounds given in \cref{P:size_and_evaluation_with_boundary} and \cref{teofinal} also hold for the non-orientable case.

\begin{corollary}[Word problem]
Let $S$ be a compact, connected surface of genus $g$ with $p>0$ boundary components. There exists an algorithm which, given two pure braids $\alpha_1, \alpha_2 \in P_n(S)$ represented by words $w_1, w_2$ of respective lengths $m_1,m_2$ in the generators of \cref{teoprespure1}, determines whether $\alpha_1=\alpha_2$ in polynomial time and space with respect to  $g, p, m_1$ and $m_2$.
\end{corollary}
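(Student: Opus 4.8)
The plan is to combine \cref{teofinal} with the compressed word problem for free groups discussed in \cref{sectionstraightline}. Given $\alpha_1$ and $\alpha_2$ represented by words $w_1$ and $w_2$ of lengths $m_1$ and $m_2$, we first apply \cref{teofinal} to each of them: for every $k=1,\ldots,n$ we obtain, in time $O((g+p+n)m_i)$, a compressed word $\mathbb A_k^{(i)}$ of size $O((g+p+n)m_i)$ whose evaluation represents the $k$-th factor $\alpha_k^{(i)}$ of the combed normal form of $\alpha_i$. The leftmost factor needs no compression (it is a subsequence of $w_i$), and we may regard it as a trivial compressed word as well, so we uniformly have $2n$ compressed words to compare.

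Next I would use the uniqueness of the combed normal form: since $P_n(S)$ decomposes as the iterated semi-direct product of free groups described in \cref{sectioncombing}, we have $\alpha_1=\alpha_2$ if and only if $\alpha_k^{(1)}=\alpha_k^{(2)}$ for every $k=1,\ldots,n$, where this last equality is taken in the corresponding free group on the generators $\{A^{\pm1}_{i,j_k}\}_{i<j_k}$ (note both $\mathbb A_k^{(1)}$ and $\mathbb A_k^{(2)}$ have the same terminal alphabet, so the comparison makes sense). For each fixed $k$, we apply \cref{T:reduce_SLP} to $\mathbb A_k^{(1)}$ and $\mathbb A_k^{(2)}$ to obtain in polynomial time compressed words $(\mathbb A_k^{(1)})_{red}$ and $(\mathbb A_k^{(2)})_{red}$ whose evaluations are the free reductions of $ev(\mathbb A_k^{(1)})$ and $ev(\mathbb A_k^{(2)})$; then $\alpha_k^{(1)}=\alpha_k^{(2)}$ in the free group if and only if these two reduced compressed words evaluate to the same word, which by \cref{T:Plandowski} can be decided in polynomial time in their sizes. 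Declaring $\alpha_1=\alpha_2$ precisely when all $n$ comparisons succeed yields the algorithm.

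The complexity bookkeeping is routine but worth stating: each $\mathbb A_k^{(i)}$ has size at most $19(2g+p+n)m_i$, so each reduction step and each equality test runs in time polynomial in $g,p,n,m_1,m_2$; since $n\leq m_i$ whenever any nontrivial generator appears (and the degenerate cases are trivial), the total running time, summed over the $n$ values of $k$, is polynomial in $g,p,m_1,m_2$, and likewise for the space, as the compressed words never get decompressed. I do not expect a genuine obstacle here: the only subtlety is making sure that \cref{T:reduce_SLP} and \cref{T:Plandowski} are applied to compressed words over a \emph{group} alphabet (i.e.\ with formal inverse letters), which is exactly the setting of the compressed word problem for a free group as discussed after \cref{T:reduce_SLP}; once that is observed, the corollary is an immediate assembly of \cref{teofinal}, \cref{T:reduce_SLP} and \cref{T:Plandowski}.
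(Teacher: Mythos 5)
Your proposal is correct and follows exactly the paper's own argument: apply compressed braid combing (\cref{teofinal}) to both words, reduce each compressed factor via \cref{T:reduce_SLP}, and compare the reduced factors with \cref{T:Plandowski}, using uniqueness of the combed normal form to conclude. The extra bookkeeping you include (factor-by-factor comparison and the bound $n\leq m_i$) is consistent with, and only elaborates on, what the paper states.
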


\begin{proof}
The algorithm just applies compressed braid combing to $w_1$ and $w_2$, then transforms each compressed factor into a reduced compressed factor (\cref{T:reduce_SLP}), and then compares the reduced compressed factors to see whether they evaluate to the same reduced word (\cref{T:Plandowski}). All these steps can be done in polynomial time, as we have explained.
\end{proof}

\section{Braid combing on a closed surface}\label{sectioncombingclosed}

Throughout this section, assume that $S$ is a closed surface (compact, connected without boundary). In this case we cannot apply braid combing as in the previous section, since the exact sequence~\labelcref{ESnm} splits if and only if either $S$ is the sphere, the torus or the Klein bottle, or if $S$ is the projective plane with $n=3$ and $m=2$, or if $S$ is a surface of bigger genus and  $m=1$~\cite{GuaschiJuanPineda}. Moreover, $\pi_1(S)$ is no longer a free group, if $S$ is not the sphere.

One can nevertheless decompose $P_n(S)$ by using another instance of Sequence~\labelcref{ESnm}, the one in which $m=1$:
\begin{equation}~\label{ESn1}
   1 \rightarrow P_{n-1}(S\backslash \mathcal \{p_1\}) \stackrel{i_{n,1}}{\longrightarrow} P_n(S) \stackrel{p_{n,1}}{\longrightarrow} \pi_1(S) \rightarrow 1.
\end{equation}

This sequence is exact if $S$ is not the sphere or the projective plane, so we will exclude those two cases. In all other cases, the sequence splits~\cite{GoncGuasch,GuaschiJuanPineda}. Since we are assuming that $S$ has no boundary, we will use the generators of $P_n(S)$ defined in \cref{teoprespure2}.

One can then decompose $P_n(S)=\pi_1(S)\ltimes P_{n-1}(S\setminus \{p_1\})$. Then $S\setminus \{p_1\}$ can be treated as a surface with boundary, so $P_{n-1}(S\setminus \{p_1\})$ can be decomposed as a semi-direct product of free groups. In the case in which $S$ is orientable with genus $g>0$ we obtain a decomposition:
$$
    P_n(S)=\pi_1(S)\ltimes ((\cdots ((\mathbb F_{2g}\ltimes \mathbb F_{2g+1})\ltimes \mathbb F_{2g+2})\ltimes \cdots \mathbb F_{2g+n-3})\ltimes \mathbb F_{2g+n-2}).
$$
Combing a braid in $P_n(S)$, when $S$ is a closed orientable surface distinct from $\mathbb S^2$, means to find its normal form with respect to this group decomposition.

It is clear that, in order give an algorithm for braid combing, one needs to describe an explicit group section for the projection $p_{n,1}$. Notice that the generators of $\pi_1(S)$, say $a_1,\ldots,a_{2g}$, are naturally associated to $A_{1,j_1},A_{2,j_1}\ldots,A_{2g,j_1}$, where $j_1=2g+1$. But (contrary to the case with boundary) the map which sends $a_i$ to $A_{i,j_1}$ is not a group homomorphism: a section for $p_{n,1}$ must be defined otherwise.

In~\cite{GoncGuasch}, such a section is defined topologically, by using a retraction of the surface $S$, and allowing the distinguished points $p_2,\ldots,p_n$, to move along the retraction as the point $p_1$ performs the movement corresponding to some $a_i$. In~\cite{GoncGuasch}, this map from $\pi_1(S)$ to $P_n(S)$ is only described algebraically in the case of an orientable surface of genus 2.

We will now define a different group section, simpler than the one defined in~\cite{GoncGuasch} (although related to it), which will be explicitly given in terms of the generators of $P_n(S)$.

The generators of $P_n(S)$ are described in \cref{figgeneradoresPnSClosed}. These are analogous to the generators described in \cref{figgeneradoresPnS}, the only differences being that $S$ has no boundary (hence the second indices of the generators are shifted), and that we have placed the base points $p_1,\ldots,p_n$ in a different place to simplify the forthcoming figures.

\begin{figure}[ht]
\centering
\includegraphics[width = 12.2cm]{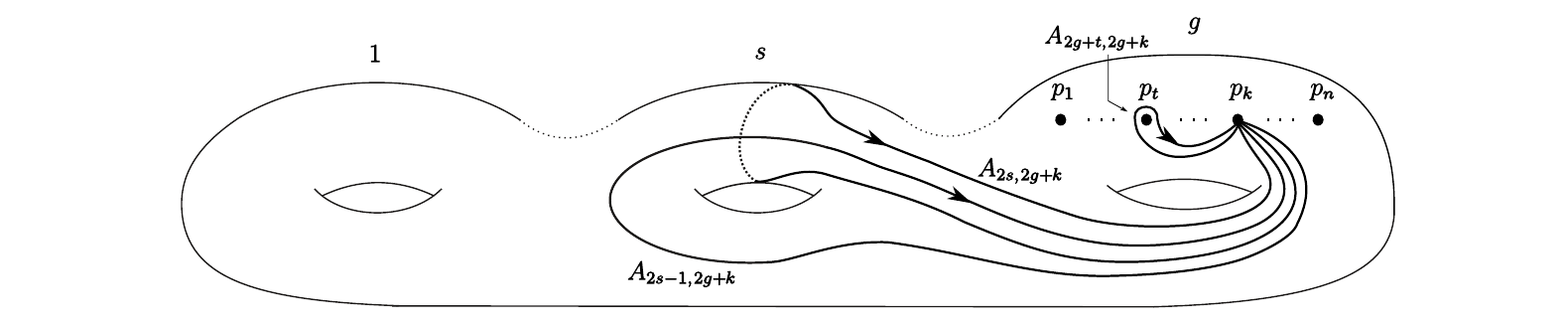}
\caption{\small{A geometric representation of the generators of $P_n(S)$ when $S$ is closed.}}
\label{figgeneradoresPnSClosed}
\end{figure}

Let us define, for $k=1,\ldots,n$, the braid $B_k=A_{2g,j_k}A_{j_1,j_k}A_{j_2,j_k}\cdots A_{j_{k-1},j_k}$, where $j_t=2g+t$ for $t=1,\ldots,n$. See \cref{figB_k} for a picture of $B_k$ as a product of generators $(a)$, and also in a simpler geometric way $(b)$.

\begin{figure}[ht]
\centering
\includegraphics[width = 12.2cm]{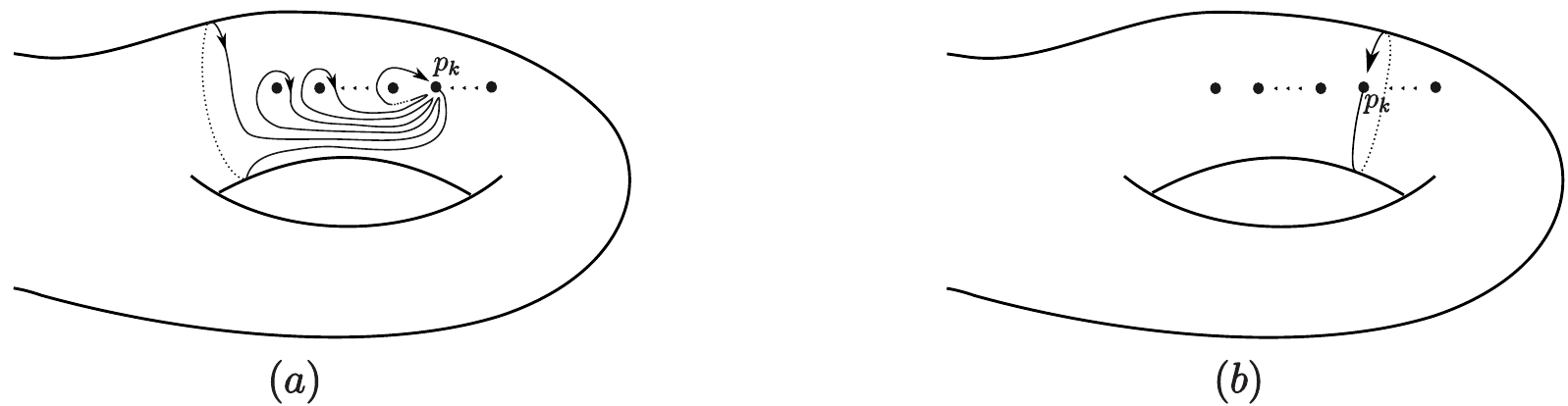}
\caption{\small{The braid $B_k=A_{2g,j_k}A_{j_1,j_k}A_{j_2,j_k}\cdots A_{j_{k-1},j_k}$.}}
\label{figB_k}
\end{figure}

Using the geometric representation of the braid $B_k$ given in \cref{figB_k}, it is clear that $B_i$ and $B_j$ commute, for every $1\leq i, j\leq n$. We will be particularly interested in the braid $B_1B_2\cdots B_n$ which can be seen in \cref{figDisjuntos_B_k}.

\begin{figure}[ht]
\centering
\includegraphics[width = 12.2cm]{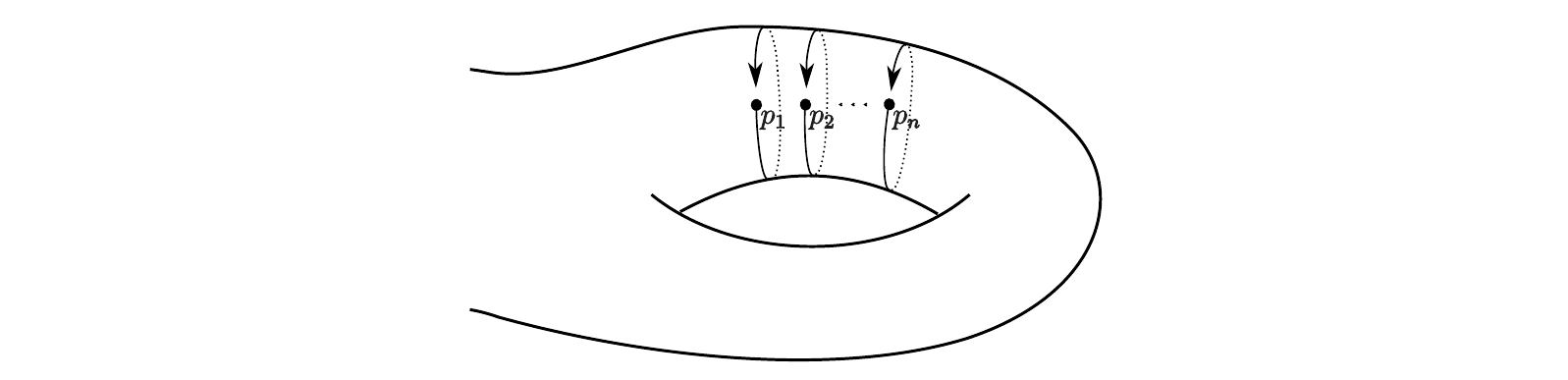}
\caption{\small{The braid $B_1B_2\cdots B_n$.}}
\label{figDisjuntos_B_k}
\end{figure}

In \cref{figpelicula} we can see the braid $(B_1B_2\cdots B_n)^{-1}A_{2g,j_1}(B_1B_2\cdots B_n)A^{-1}_{2g,j_1}$ in picture $(a)$, which is smoothly transformed into the braid in picture $(d)$. It is a classical exercise to see how to express the path in picture $(d)$ as a product of generators of the fundamental group of $S$. In our case, this allows to express that braid as a product of the generators of $P_n(S)$, as follows:
$$
   (B_1B_2\cdots B_n)^{-1}A_{2g,j_1}(B_1B_2\cdots B_n)A^{-1}_{2g,j_1} = [A_{1,j_1},A^{-1}_{2,j_1}]\cdots [A_{2g-3,j_1},A^{-1}_{2g-2}].
$$

\begin{figure}[ht]
\centering
\includegraphics[width = 12.2cm]{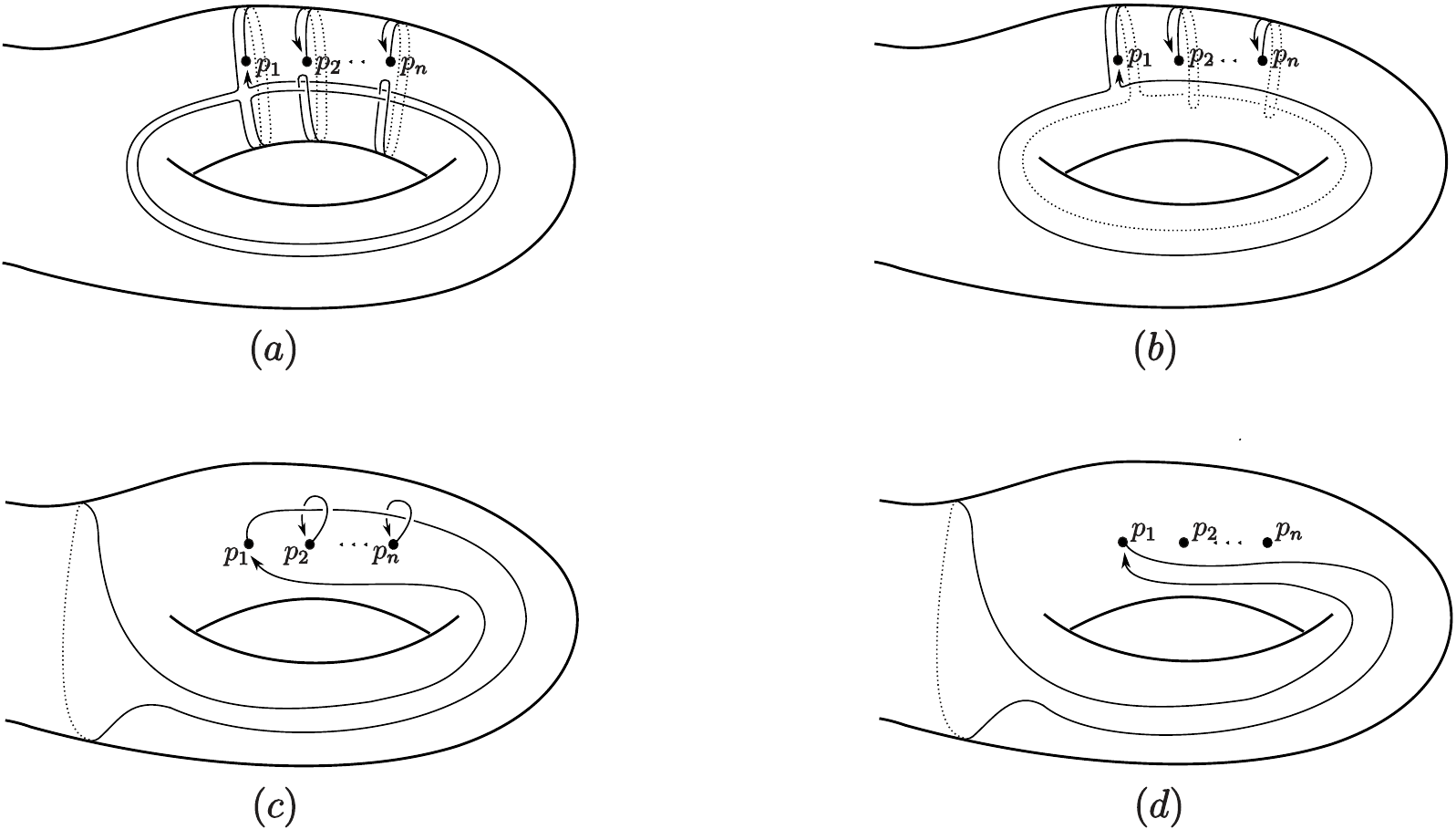}
\caption{\small{The braid $(B_1B_2\cdots B_n)^{-1}A_{2g,j_1}(B_1B_2\cdots B_n)A^{-1}_{2g,j_1}$.}}
\label{figpelicula}
\end{figure}

It follows that, in $P_n(S)$, one has
$$
   [(B_1B_2\cdots B_n)^{-1}, A_{2g,j_1}] [A^{-1}_{2g-2}, A_{2g-3,j_1}]\cdots [A^{-1}_{2,j_1}, A_{1,j_1}]=1.
$$
Now recall that the fundamental group of $S$ has the following presentation:
$$
   \pi_1(S)=\left\langle a_1,\ldots,a_{2g};\ [a^{-1}_{2g}, a_{2g-1}][a^{-1}_{2g-1}, a_{2g-2}]\cdots [a^{-1}_2, a_1]=1\right\rangle,
$$
where $a_i=p_{n,1}(A_{i,j_1})$ for $i=1,\ldots, 2g$. If we notice that $p_{n,1}(B_1B_2\cdots B_n)=a_{2g}$, the following result is immediately obtained:

\begin{theorem}\label{T:section}
Let $S$ be an orientable closed surface of genus $g\geq 1$. The map $s:\ \pi_1(S) \rightarrow P_n(S)$ which sends $a_i$ to $A_{i,j_1}$ for $i=1,\ldots, 2g-1$, and $a_{2g}$ to $B_1\cdots B_n$, is a group section of the projection $p_{n,1}$ of the short exact sequence~\labelcref{ESn1}.
\end{theorem}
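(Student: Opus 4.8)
The plan is to verify that the candidate map $s$ is a well-defined group homomorphism, and then check that $p_{n,1}\circ s = \mathrm{id}_{\pi_1(S)}$; the latter is immediate once the former is established. Since $\pi_1(S)$ is given by the explicit presentation with generators $a_1,\ldots,a_{2g}$ and the single surface relator $R = [a_{2g}^{-1},a_{2g-1}][a_{2g-1}^{-1},a_{2g-2}]\cdots[a_2^{-1},a_1]$, to define a homomorphism $s$ it suffices to specify the images of the generators (which we do: $a_i \mapsto A_{i,j_1}$ for $i < 2g$ and $a_{2g}\mapsto B_1B_2\cdots B_n$) and to check that the image of the relator $R$ under this assignment is trivial in $P_n(S)$. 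So the whole theorem reduces to a single identity in $P_n(S)$, namely
$$
[(B_1B_2\cdots B_n)^{-1}, A_{2g-1,j_1}]\,[A_{2g-1,j_1}^{-1},A_{2g-2,j_1}]\cdots[A_{2,j_1}^{-1},A_{1,j_1}] = 1.
$$
But this is precisely (a trivial rearrangement of) the relation displayed just above the theorem statement, which was derived from the geometric computation in \cref{figpelicula}: the braid $(B_1\cdots B_n)^{-1}A_{2g,j_1}(B_1\cdots B_n)A_{2g,j_1}^{-1}$ equals $[A_{1,j_1},A_{2,j_1}^{-1}]\cdots[A_{2g-3,j_1},A_{2g-2,j_1}^{-1}]$, which rearranges to the vanishing of $R$ with $a_{2g}$ replaced by $B_1\cdots B_n$ and $a_i$ by $A_{i,j_1}$.

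Concretely, the steps I would carry out are: (1) recall the presentation of $\pi_1(S)$ and note that $a_i = p_{n,1}(A_{i,j_1})$ for $i=1,\ldots,2g$, so that $p_{n,1}$ maps the free group on the $A_{i,j_1}$ onto $\pi_1(S)$; (2) observe that $p_{n,1}(B_1\cdots B_n) = a_{2g}$, which follows from the definition $B_k = A_{2g,j_k}A_{j_1,j_k}\cdots A_{j_{k-1},j_k}$ together with the fact that $p_{n,1}(A_{i,j})$ is $1$ whenever $j \neq j_1$ and equals $a_i$ when $j = j_1$ — hence $p_{n,1}(B_1) = a_{2g}$ and $p_{n,1}(B_k) = 1$ for $k\geq 2$; (3) invoke the identity established geometrically above \cref{T:section} to conclude that the relator of $\pi_1(S)$ is satisfied by the proposed images, so $s$ is a well-defined homomorphism; (4) check $p_{n,1}\circ s = \mathrm{id}$ on generators: $p_{n,1}(s(a_i)) = p_{n,1}(A_{i,j_1}) = a_i$ for $i<2g$, and $p_{n,1}(s(a_{2g})) = p_{n,1}(B_1\cdots B_n) = a_{2g}$ by step (2); since these generate $\pi_1(S)$, we get $p_{n,1}\circ s = \mathrm{id}$ and therefore $s$ is a section.

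The only genuine content is the geometric identity expressing $(B_1\cdots B_n)^{-1}A_{2g,j_1}(B_1\cdots B_n)A_{2g,j_1}^{-1}$ as the stated product of commutators, together with the fact that the $B_k$'s commute pairwise — and both of these have already been argued via the figures preceding the statement. So I do not expect any real obstacle here: the work is bookkeeping, namely making sure the indices and the direction of the commutators in the presentation of $\pi_1(S)$ match those in the derived relation in $P_n(S)$ (the relator reads $[a_{2g}^{-1},a_{2g-1}]\cdots[a_2^{-1},a_1]$, so one must track the transposition of the first commutator $[(B_1\cdots B_n)^{-1},A_{2g-1,j_1}]$ versus $[A_{2g-1,j_1}^{-1}, B_1\cdots B_n]$, using that $[x,y]^{-1} = [y,x]$). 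If I had to name a subtlety, it is verifying that this really is a defining relator and not merely a consequence — but since we are only asked to check a homomorphism is well-defined, it suffices that the images satisfy the relator, which is exactly what we have.
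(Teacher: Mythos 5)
Your argument is correct and is essentially the paper's own proof: the theorem is presented there as an immediate consequence of the displayed commutator identity in $P_n(S)$, the stated presentation of $\pi_1(S)$, and the observation that $p_{n,1}(B_1\cdots B_n)=a_{2g}$ (with $p_{n,1}(A_{i,j_1})=a_i$), which is exactly the reduction you spell out. The only point worth noting is that the key identity must involve $A_{2g-1,j_1}$, as you write it, to match the relator's first commutator $[a_{2g}^{-1},a_{2g-1}]$; the paper's displayed formula prints $A_{2g,j_1}$ in that position, which is a misprint rather than a different identity, so your ``trivial rearrangement'' reading is the intended one.
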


\begin{remark}
To our knowledge, the above result gives the first known explicit algebraic section of $p_{n,1}$, when $g>2$. Moreover, we can also give an explicit algebraic definition of the group section described geometrically in~\cite{GoncGuasch}: It is the map $\rho: \pi_1(S)\rightarrow P_n(S)$ such that $\rho(a_i)=A_{i,j_1}$ when $i$ is odd, and $\rho(a_i)=A_{i,j_1}B_2\cdots B_n$ when $n$ is even. The proof that $\rho$ is a group section is similar to the one we did for $s$. We used $s$ instead of $\rho$ as it is an algebraically simpler section.
\end{remark}

Now we can comb a braid in a closed orientable surface $S$ with genus $g>0$, using the above section (which allows to compute the normal form with respect to the decomposition $P_n(S)=\pi_1(S) \ltimes P_{n-1}(S\backslash\{p_1\})$), and then applying the combing procedure of \cref{sectioncombing} to the second factor, obtaining the normal form with respect to the decomposition:

$$
    P_n(S)=\pi_1(S)\ltimes ((\cdots ((\mathbb F_{2g}\ltimes \mathbb F_{2g+1})\ltimes \mathbb F_{2g+2})\ltimes \cdots \mathbb F_{2g+n-3})\ltimes \mathbb F_{2g+n-2}).
$$

There is one detail to be taken into account. When we decompose $P_n(S)=\pi_1(S) \ltimes P_{n-1}(S\backslash\{p_1\})$, the group $P_{n-1}(S\backslash\{p_1\})$ is considered as a subgroup of $P_n(S)$ (formed by the braids in which the first strand is trivial). In other words, the generators of this group are the braids of the form $A_{i,j_k}$, where $i\leq j_k=2g+k$ and $k>1$. But then we consider the group $P_{n-1}(S\backslash\{p_1\})$ as a braid group of a surface with boundary, $S'$, which is obtained by removing a small neighborhood of $p_1$.

The generators of $P_{n-1}(S')$ are shown in \cref{figgeneradoresPnS}, where the only boundary component is placed on the right hand side of the picture. We can express any word in the generators of $P_{n-1}(S\backslash\{p_1\})$ as a word in the generators of $P_{n-1}(S')$ thanks to the isomorphism $f: \ P_{n-1}(S\backslash\{p_1\}) \rightarrow P_{n-1}(S')$ defined as follows:
$$
   f(A_{i,j_k})=\left\{\begin{array}{ll}
     A_{i,j_{k-1}} & \mbox{if } i\leq 2g, \\ \\
   \displaystyle \left[A_{2g,j_{k-1}}^{-1}, A_{2g-1,j_{k-1}}\right]\cdots \left[A_{2,j_{k-1}}^{-1},A_{1,j_{k-1}}\right]\left(\prod_{t=1}^{k-2}A_{j_{t},j_{k-1}} \prod_{t=k}^{n-1}A_{j_{k-1},j_{t}}\right)^{-1} & \mbox{if } i=2g+1, \\ \\
     A_{i-1,j_{k-1}} & \mbox{if } 2g+1 < i <j_k.
   \end{array}\right.
$$
These formulae are obtained by interpreting the generators of $P_{n-1}(S\backslash\{p_1\})$ as points moving in the surface $S'$, in which the point $p_1$ has been transformed into a boundary component (and moved to the right hand side, like in \cref{figgeneradoresPnS}). All interpretations are straightforward, except the generator $A_{2g+1,j_k}=A_{j_1,j_k}$. In $P_{n-1}(S\backslash\{p_1\})$, this generator corresponds to a movement of the puncture $p_k$ around the puncture $p_1$. In $P_{n-1}(S')$, however, there is no generator corresponding to a puncture moving around the last (and only) boundary of $S'$. We must then apply the relation (TR) of \cref{teoprespure2}, to express $A_{j_1,j_k}$ as a product of other generators, which are then mapped to $P_{n-1}(S')$ as expressed in the above equation. Once we have applied the map $f$, we can comb the resulting braid in $P_{n-1}(S')$ as it was explained in \cref{sectioncombing}.

Now we will explain why we cannot apply the techniques in \cref{sectioncompressed} to comb a braid in a closed surface. The idea of combing, as it was done in \cref{sectioncombing}, is to {\it move to the left} the generators with smaller second index, which act by conjugation on the generators with bigger second index. If the surface $S$ has boundary, the action of a generator $A_{i,j}^{\pm 1}$ on a generator $A_{r,s}$, produces a word in which all letters have second index $s$. Hence, the $k$-th final factor of the combed braid only depends on the letters of the original word having second index $j_k$, and on the letters of bigger second subindex which act on them. This is why we can easily determine the compressed word associated to the $k$-th factor.

If the surface $S$ is closed, however, the action of a generator $A_{i,j}^{\pm 1}$ on a generator $A_{r,s}$, does not necessarily produce a word in which all letters have second index $s$, due to the necessity of applying the map $f$. As an example, consider the relation (ER1) with $j=j_1$:
$$
  A^{-1}_{r+1,j_1}A_{r,s}A_{r+1,j_1} = A_{r,s}A_{r+1,s}A^{-1}_{j_1,s}A^{-1}_{r+1,s}.
$$
All letters in the resulting word seem to have the same second subindex, but when we apply the map $f$ to see the braid in $P_{n-1}(S')$, the letter $A^{-1}_{j_1,s}$ must be replaced by a word whose letters have second subindex going from $s-1$ to $n-1$. This fact does not permit to obtain the factors of a combed braid as compressed words, as was done in \cref{sectioncombing}. A different approach should therefore be used, in order to find a polynomial solution to the word problem of braid groups on closed surfaces.

Nevertheless, the algebraic description of the section $s$ in \cref{T:section} allows to perform (non-compressed) braid combing in the classical way, as explained in this section. This was not possible before, due to the lack of an algebraically explicit section. This procedure of combing a braid in a closed surface is, however, exponential.


\vspace{0.5cm}

\begin{minipage}[l]{10cm}
\noindent \textbf{Juan Gonz\'alez-Meneses} \\     
Dpto. \'Algebra \\
Facultad de Matem\'aticas \\ 
Instituto de Matem\'aticas (IMUS)\\
Universidad de Sevilla\\
Avda. Reina Mercedes s/n\\
41012 Sevilla (SPAIN) \\
{\tt meneses@us.es}\\ 
\end{minipage}
\begin{minipage}[l]{5cm}
\noindent \textbf{Marithania Silvero} \\
Institute of Mathematics \\
Polish Academy of Sciences\\
ul. \'Sniadeckich, 8 \\
00-656 Warsaw (POLAND) \\
{\tt marithania@us.es} \\
\\
\\
\end{minipage}

\end{document}